\newtheorem{theorem}{Theorem}[section]
\newtheorem{lemma}[theorem]{Lemma}
\newtheorem{remark}[theorem]{Remark}
   \newcommand{\ba}{\begin{eqnarray}}
   \newcommand{\na}{\end{eqnarray}}
   \newcommand{\ban}{\begin{eqnarray*}}
   \newcommand{\nan}{\end{eqnarray*}}
\newcommand{\bA}{{\mathbb A}}
\newcommand{\bC}{{\mathbb C}}
\newcommand{\bF}{{\mathbb F}}
\newcommand{\bP}{{\mathbb P}}
\newcommand{\bQ}{{\mathbb Q}}
\newcommand{\bZ}{{\mathbb Z}}
\newcommand{\cO}{{\mathcal O}}
\newcommand{\fz}{{\mathfrak z}}
  \newcommand{\<}{\langle}
  \renewcommand{\>}{\rangle}
\begin{document}

\title{Stable pair invariants under blow-ups}

\author[Hua-Zhong Ke]{Hua-Zhong Ke}
  \address{Department of Mathematics\\  Sun Yat-Sen University\\
                        Guangzhou,  510275\\ China }
  \email{kehuazh@mail.sysu.edu.cn}

\begin{abstract}

We use degeneration formula to study the change of stable pair invariants of $3$-folds under blow-ups and obtain some closed blow-up formulae. Related results on Donaldson-Thomas invariants are also discussed. Our results give positive evidence for GW/DT/P correspondence, and also give partial correspondence for varieties not necessarily toric or complete intersections.

\ \\
Key words:
Stable pair invariant,
Blow-up,
Degeneration formula,
Virtual localization,
Degenerate contribution,
GW/DT/P correspondence
\end{abstract}

\date{\today}
\maketitle
\tableofcontents

\section{Introduction}

Curve counting theories have played prominent roles in both mathematics and physics in the last two decades. For any nonsingular $3$-fold $X$, there are (at least) three different curve counting theories on $X$. Much studied Gromov-Witten theory counts stable maps from curves to $X$. Donaldson-Thomas theory \cite{DT, Th} counts one dimensional subschemes in $X$. Stable pair theory, introduced by Pandharipande and Thomas in \cite{PT}, counts pairs $(C, D)$ where $C\subset X$ is an embedded curve and $D$ is a divisor on $C$. It is conjectured that these three curve counting theories on $X$ are equivalent \cite{MNOP1, MNOP2, PT, PP4}. GW/DT/P correspondence has been proved in many important cases, including quintic $3$-folds \cite{Br, MOOP, OP, PP4, PP5, T1}. This suggests that many phenomena in one theory have counterparts in the other two theories.

A fundamental problem in Gromov-Witten theory is to understand how Gromov-Witten invariants change under surgeries \cite{LR, R}. For $3$-folds, the first breakthrough in this direction is the work of Li and Ruan \cite{LR} on the transformation of Gromov-Witten invariants under flops and extremal small transitions. In birational geometry, blow-up is an elementary surgery, but it is rare to be able to obtain closed blow-up formulae for Gromov-Witten invariants. In the last twenty years, only a few limited cases were known \cite{Ga, H1, H2, HHKQ, HLR}. It is also important to study the effect of surgeries on Donaldson-Thomas theory. Hu and Li \cite{HL} have studied the transformation of Donaldson-Thomas invariants under blow-ups at points, ordinary flops and extremal small transitions. For general flops between Calabi-Yau $3$-folds, Toda \cite{T2} has established the flop formula for Donaldson-Thomas invariants, and hence for stable pair invariants due to the DT/P correspondence in the Calabi-Yau case \cite{Br, T1}. In this paper, we study the transformation of stable pair invariants under blow-ups.

Throughout this paper, let $X$ be an irreducible, nonsingular, projective $3$-fold over $\bC$, and $p:\tilde{X}\rightarrow X$ the blow-up of $X$ at a point $P$ or along an irreducible, nonsingular embedded curve $C$ of $X$. Let $E$ be the exceptional divisor of the blow-up, and $e\in H_2(\tilde X,\bZ)$ the class of a line in the fiber of $E$. Note that $p$ induces a natural injection via 'pull-back' of $2$-cycles 
\ban
p^!=PD_{\tilde X}\circ p^*\circ PD_X:H_2(X,\bZ)\rightarrow H_2(\tilde X,\bZ),
\nan
where the image of $p^!$ is the subset of $H_2(\tilde X,\bZ)$ consisting of $2$-cycles having intersection number zero with $E$. We will compare partition functions $Z_P$ of stable pair invairants of $X$ and those of $\tilde X$, the definition of which will be reviewed in Section 2.

We first consider blow-up at a point.
\begin{theorem}\label{pt0}
Let $p:\tilde X\rightarrow X$ be the blow-up at a point. Suppose  that $\gamma_1,\cdots,\gamma_m\in H^{>0}(X, {\mathbb Q})$, and $d_1,\cdots,d_m\in\bZ_{\geqslant 0}$. Then for any $\beta\in H_2(X, {\mathbb Z})$ and $k\in\bZ_{>0}$, we have
\ban
Z_P\bigg(\tilde X;q|\prod\limits_{i=1}^m\tau_{d_i}(p^*\gamma_i)\bigg)_{p^!\beta+ke}=0.
\nan
\end{theorem}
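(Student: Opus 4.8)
The plan is to attack the statement with the degeneration formula for stable pair invariants, degenerating $\tilde X$ to the normal cone of the exceptional divisor $E$ and exploiting the fact that the descendent insertions, being pulled back from $X$, vanish on the exceptional component. First I would form the deformation to the normal cone $\mathcal{X}=\mathrm{Bl}_{E\times 0}(\tilde X\times\bA^1)\to\bA^1$, whose general fibre is $\tilde X$ and whose central fibre is the normal crossing variety $\tilde X\cup_E Y$ with $Y=\bP(N_{E/\tilde X}\oplus\cO_E)$ the projective completion of the normal bundle. Composing with $p$ gives a morphism $\mathcal{X}\to X$ under which the descendent weights $p^*\gamma_i$ extend over the family; since the restriction of this morphism to $Y$ factors through the point $P$, each $p^*\gamma_i$ restricts to $0$ on $Y$ for $\gamma_i\in H^{>0}(X,\bQ)$. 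Applying the stable pair degeneration formula of Li--Wu therefore expresses $Z_P(\tilde X;q\,|\,\prod_i\tau_{d_i}(p^*\gamma_i))_{p^!\beta+ke}$ as a sum over splittings in which every descendent insertion is carried by the factor relative to $(\tilde X, E)$, while the factor relative to $(Y,E)$ carries none.

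The next step is to pin down the distribution of the curve class. If the proper-transform factor carries a class $p^!\beta'+k_1 e$, then its contact order with $E$ equals $(p^!\beta'+k_1e)\cdot E=-k_1$, which must be non-negative, forcing $k_1\leq 0$; effectivity of the class forces $k_1\geq 0$; hence $k_1=0$ and the tangency datum along $E$ is empty. Since the exceptional component $Y$ lies over the point $P$ and contributes no class in the image of $p^!$, the proper-transform factor must in fact be in class $p^!\beta$. Thus in every term the $(\tilde X,E)$-factor is the relative invariant in class $p^!\beta$ with trivial tangency and carrying all of $\prod_i\tau_{d_i}(p^*\gamma_i)$, while the full exceptional class $ke$ is pushed onto the insertion-free $(Y,E)$-factor.

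Finally I would close the argument by a virtual-dimension count on the proper-transform factor. Since $c_1(T_{\tilde X})=p^*c_1(T_X)-2E$ and $p^!\beta\cdot E=0$, the relative virtual dimension in class $p^!\beta$ is $\int_{p^!\beta}\bigl(p^*c_1(T_X)-3E\bigr)=\int_\beta c_1(T_X)$, whereas the total complex codimension of $\prod_i\tau_{d_i}(p^*\gamma_i)$, in the only case where the full invariant could be nonzero, matches the absolute virtual dimension of $\tilde X$ in class $p^!\beta+ke$, namely $\int_{p^!\beta+ke}\bigl(p^*c_1(T_X)-2E\bigr)=\int_\beta c_1(T_X)+2k$. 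Thus the insertions overshoot the relative virtual dimension of the proper-transform factor by exactly $2k>0$, so that factor vanishes; as it appears in every term of the degeneration formula, the whole partition function vanishes.

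The main obstacle is the bookkeeping of the second paragraph: one must justify, within the Li--Wu good-degeneration framework, that effectivity together with non-negativity of contact orders along $E$ really does force all of $ke$ onto the insertion-free component $Y$, and then verify the matching of relative virtual dimensions so that the descendent degree genuinely exceeds it by $2k$. Should one instead wish to evaluate the exceptional factor directly, it is computed by the fibrewise $\bC^*$-localization on $Y$ and the resulting degenerate contributions; but for the vanishing asserted here only the dimension bound on the proper-transform factor is needed.
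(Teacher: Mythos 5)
Your overall strategy---deformation to the normal cone of $E$, pushing all descendents onto the $(\tilde X,E)$ factor because $p^*\gamma_i$ vanishes on the exceptional component, and then a dimension count---is the same as the paper's, but your middle step contains a genuine error. You claim that if the $(\tilde X,E)$-factor carries the class $\beta_2=p^!\beta'+k_1e$ then ``effectivity of the class forces $k_1\geqslant 0$,'' and from this you conclude $k_1=0$, empty tangency along $E$, and that all of $ke$ is carried by the insertion-free component $Y$. This is false: effective classes with $k_1<0$ abound (the proper transform of a curve passing through the blown-up point with multiplicity $m$ has class $p^!\beta'-me$), and these are exactly the classes with $\beta_2\cdot E=-k_1=|\eta|>0$ that the relative geometry of $(\tilde X,E)$ permits. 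So the terms of the degeneration formula with nonempty cohomology weighted partition $\eta$ are not excluded by your argument, and your final dimension count, which treats only the case $\eta=\emptyset$ (overshoot by $2k$), does not dispose of them.

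The gap is repairable, and the repair is essentially what the paper does: impose the dimension constraint of the degeneration formula uniformly in $\eta$. On the exceptional component $Y\cong\tilde\bP^3$ the class $\beta_1$ satisfies $\beta_1\cdot H=|\eta|$ and $\beta_1\cdot E=-k$, whence $\int_{\beta_1}c_1(\tilde\bP^3)=4|\eta|+2k$ and the constraint becomes $\frac12\sum_{i}\deg\delta^{j_i}+3|\eta|+2k=\ell(\eta)$, which is impossible for $k>0$ since $\ell(\eta)\leqslant|\eta|$. Equivalently, on the side you chose to estimate: for general $\eta$ one has $\beta_2=p^!\beta-|\eta|e$, so the relative virtual dimension $\int_{\beta_2}c_1(\tilde X)=\int_\beta c_1(X)-2|\eta|$ drops, while the total insertion degree (the descendents contribute $\int_\beta c_1(X)+2k$ and the relative class $\epsilon^*C_{\eta^\vee}$ contributes $|\eta|-\ell(\eta)+\frac12\sum_i\deg\delta^{j_i}$) only grows; the mismatch is $2k+3|\eta|-\ell(\eta)+\frac12\sum_i\deg\delta^{j_i}>0$ in every case. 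You need to carry out this computation for nonempty $\eta$ for the proof to be complete.
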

\begin{theorem}\label{pt1}
Let $p:\tilde X\rightarrow X$ be the blow-up at a point. Suppose  that $\gamma_1,\cdots,\gamma_m\in H^{>0}(X, {\mathbb Q})$, and $d_1,\cdots,d_m\in\bZ_{\geqslant 0}$. Then for any nonzero $\beta\in H_2(X, {\mathbb Z})$, we have
\ban
Z_P\bigg(X;q|\prod\limits_{i=1}^m\tau_{d_i}(\gamma_i)\bigg)_\beta=Z_P\bigg(\tilde X;q|\prod\limits_{i=1}^m\tau_{d_i}(p^*\gamma_i)\bigg)_{p^!\beta}.
\nan
\end{theorem}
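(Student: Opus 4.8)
The plan is to degenerate $X$ to the normal cone of the point $P$ and apply the degeneration formula for stable pair invariants. Blowing up $X\times\bA^1$ along $P\times\{0\}$ produces a flat family over $\bA^1$ whose general fibre is $X$ and whose special fibre is $\tilde X\cup_E\bP^3$, where $\tilde X=\mathrm{Bl}_PX$ and $\bP^3=\bP(T_PX\oplus\bC)$ are glued along $E=\bP(T_PX)=\bP^2$, realised as the exceptional divisor of $\tilde X$ on one side and as a hyperplane of $\bP^3$ on the other. By deformation invariance together with the degeneration formula, $Z_P(X;q|\prod_i\tau_{d_i}(\gamma_i))_\beta$ equals a sum of products of relative stable pair invariants of $(\tilde X,E)$ and $(\bP^3,\bP^2)$, glued along cohomology-weighted partitions of $H^*(E)$. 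The first point to record is that all descendent insertions migrate to the $\tilde X$ component: since each $\gamma_i$ has positive degree and $\bP^3$ is contracted to $P$ by the map to $X$, the lift of $\gamma_i$ restricts to $0$ on $\bP^3$ and equals $p^*\gamma_i$ on $\tilde X$, which in turn restricts to $0$ on $E$ because $p(E)=P$. Hence no descendent splits across the divisor and the $(\bP^3,\bP^2)$ factor carries no insertions.

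Next I would track curve classes. On the special fibre a class splits as $(\beta_1,\beta_2)$ with $\beta_1\in H_2(\tilde X,\bZ)$ and $\beta_2=k'\ell\in H_2(\bP^3,\bZ)$, $k'\geqslant0$, subject to the matching condition $\beta_1\cdot E=\beta_2\cdot\bP^2=k'$ and to $p_*\beta_1=\beta$. Writing $\beta_1=p^!\beta+je$ and using $p^!\beta\cdot E=0$ and $e\cdot E=-1$, the matching condition forces $j=-k'$, so $\beta_1=p^!\beta-k'e$ and $\beta_2=k'\ell$. The term $k'=0$ has $\beta_2=0$, hence trivial boundary condition along $E$ (the pair is disjoint from $E$): its $(\bP^3,\bP^2)$ factor is the empty pair contributing $1$, while its $(\tilde X,E)$ factor is a relative invariant with empty boundary, which equals the absolute invariant $Z_P(\tilde X;q|\prod_i\tau_{d_i}(p^*\gamma_i))_{p^!\beta}$. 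This is precisely the right-hand side of the theorem, so it remains to prove that every term with $k'\geqslant1$ vanishes.

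The vanishing of the $k'\geqslant1$ terms is the heart of the matter. Such a term pairs a relative invariant of $(\tilde X,E)$ in class $p^!\beta-k'e$ against a nontrivial boundary condition with a relative invariant of $(\bP^3,\bP^2)$ in class $k'\ell$; neither factor vanishes individually, so the cancellation must be extracted from the combined sum. The route I would take is to pin down the relative theory of $(\tilde X,E)$ by a second degeneration, the deformation of $\tilde X$ to the normal cone of $E$, whose special fibre is $\tilde X\cup_E R$ with $R=\bP(\mathcal O_{\bP^2}(-1)\oplus\mathcal O)$. Applying the degeneration formula there expresses the absolute invariants $Z_P(\tilde X;q|\prod_i\tau_{d_i}(p^*\gamma_i))_{p^!\beta+ke}$, for all $k$, through the same relative invariants of $(\tilde X,E)$ and the $\bC^*$-rigid rubber invariants of $(R,E)$. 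Theorem \ref{pt0} asserts that these absolute invariants vanish for every $k>0$, which yields a system of linear relations among the relative invariants of $(\tilde X,E)$ with nonzero boundary class. Feeding these relations, together with the rigidity of the normal-cone rubber, back into the first degeneration formula should force the $k'\geqslant1$ contributions to telescope to zero.

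I expect the principal obstacle to be exactly this elimination step: reconciling the two degeneration formulae requires a careful match of the boundary bookkeeping across the divisor $E$ (in particular the classes $p^!\beta-k'e$ appearing in the first degeneration against the classes $p^!\beta+ke$ controlled by Theorem \ref{pt0} in the second) and a precise rubber calculus for the $(R,E)$ and $(\bP^3,\bP^2)$ factors. Once this combinatorial and rubber-theoretic cancellation is established, only the $k'=0$ term survives and the theorem follows.
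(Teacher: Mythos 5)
Your setup (deformation to the normal cone of $P$, special fibre $\tilde X\cup_E\bP^3$, migration of all descendent insertions to the $\tilde X$ side because the $\gamma_i$ have positive degree and can be supported away from $P$) agrees with the paper. But the heart of your argument is a claim that is false and a substitute mechanism that is never carried out. You assert that for $k'\geqslant 1$ ``neither factor vanishes individually, so the cancellation must be extracted from the combined sum,'' and you propose to extract it by combining Theorem \ref{pt0} with a rubber calculus for $(R,E)$ and $(\bP^3,\bP^2)$. In fact each term with nontrivial boundary partition $\eta$ vanishes term by term, for elementary degree reasons: the $(\bP^3,H)$ factor carries no insertions other than $\epsilon^*C_\eta$, its curve class is forced to be $|\eta|L$ by $\beta_1\cdot H=|\eta|$, so its virtual dimension is $\int_{\beta_1}c_1(\bP^3)=4|\eta|$, while $C_\eta$ has complex degree at most $2|\eta|$. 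Equivalently, the paper's combined dimension constraint reads $\tfrac12\sum_i\deg\delta^{j_i}+3|\eta|=\ell(\eta)$, which has no solution with $|\eta|>0$ since $\ell(\eta)\leqslant|\eta|$. This dimension count is the entire content of the vanishing, and it is missing from your proposal; the telescoping/elimination scheme you sketch is both unnecessary and left entirely unexecuted (you yourself flag it as the ``principal obstacle'').

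There is a second gap: after isolating the $k'=0$ term you identify the relative invariant $Z_P(\tilde X/E;q|\cdots|\,)_{p^!\beta}$ with empty boundary condition with the absolute invariant $Z_P(\tilde X;q|\cdots)_{p^!\beta}$ as if this were tautological. It is not: the relative moduli space still involves expanded degenerations along $E$. The paper proves this identification as a separate statement (Lemma \ref{ptlemma2}) by a second degeneration, namely the deformation of $\tilde X$ to the normal cone of $E$ with new component $\tilde\bP^3\cong\bP_{\bP^2}(\cO\oplus\cO(1))$, followed by the same dimension count (here $\beta_1\cdot E=0$ forces $\beta_1=|\eta|L$ and again $\eta=\emptyset$). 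You do invoke this second degeneration, but only as input to your cancellation scheme, not for the purpose it is actually needed. To repair the proof, replace the elimination argument by the degree count above, and add the second comparison lemma to convert the surviving relative invariant into the absolute one.
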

\begin{theorem}\label{pt2}
Under the same assumptions as in Theorem \ref{pt1}, we have
\ban
Z_P\bigg(X;q|\tau_0([pt])\prod\limits_{i=1}^m\tau_{d_i}(\gamma_i)\bigg)_\beta=(1+q)^2\cdot Z_P\bigg(\tilde X;q|\prod\limits_{i=1}^m\tau_{d_i}(p^*\gamma_i)\bigg)_{p^!\beta-e}.
\nan
\end{theorem}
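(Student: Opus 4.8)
The plan is to run the same degeneration argument used for Theorem \ref{pt1}, but now carrying along the extra incidence $\tau_0([pt])$, and to isolate the discrepancy it produces as a universal local factor. First I would form the deformation to the normal cone of $P\in X$: blow up $X\times\bA^1$ along $P\times\{0\}$ to obtain a family over $\bA^1$ whose generic fiber is $X$ and whose special fiber is $\tilde X\cup_E W$, where $W=\bP(T_PX\oplus\bC)\cong\bP^3$ is the projectivized normal cone and the gluing divisor is $E=\bP^2$. Applying the degeneration formula for stable pair invariants recalled in Section 2 then expresses $Z_P\big(X;q|\tau_0([pt])\prod_i\tau_{d_i}(\gamma_i)\big)_\beta$ as a sum, over splittings of the curve class and over cohomology-weighted partitions $\eta$ of the relative multiplicity along $E$, of products of relative invariants of $(\tilde X,E)$ with relative invariants of $(W,E)=(\bP^3,\bP^2)$.

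The decisive bookkeeping is to identify which terms survive. The descendents $\tau_{d_i}(p^*\gamma_i)$ carry classes pulled back from $X$, and since $\gamma_i\in H^{>0}(X,\bQ)$ restricts to zero at the point $P$, these insertions can only be distributed to the $\tilde X$ factor; the point insertion $\tau_0([pt])$ is the only one free to specialize onto the exceptional component, and I would choose its representative to limit into $W$. This forces the support of the stable pair to acquire a one-dimensional component through the marked point of $W$, hence a line in $\bP^3$ meeting $E$. Because $p^!\beta\cdot E=0$ while $e\cdot E=-1$, matching relative multiplicities along $E$ together with the requirement that the total class push forward to $\beta$ pins down the only admissible splitting: class $p^!\beta-e$ on the $\tilde X$ side, meeting $E$ once with multiplicity one, glued to a single line through the marked point on the $\bP^3$ side. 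Exactly as in Theorems \ref{pt0} and \ref{pt1}, the positivity of the $\gamma_i$, the hypothesis $\beta\neq 0$, and the dimension/vanishing estimates from virtual localization kill every higher class in $W$ and every partition $\eta$ with multiplicity exceeding one.

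It then remains to evaluate the two resulting factors. On the $\tilde X$ side the minimal, transverse relative condition should reduce the relative invariant of $(\tilde X,E)$ to the absolute invariant $Z_P\big(\tilde X;q|\prod_i\tau_{d_i}(p^*\gamma_i)\big)_{p^!\beta-e}$, while the $(\bP^3,\bP^2)$ side contributes a universal series in $q$ independent of $\beta$ and of the $\gamma_i$. I would compute this local series—the relative stable pair partition function of $(\bP^3,\bP^2)$ in the line class, with one point incidence and relative multiplicity one—by virtual localization with respect to the torus acting on $\bP^3$ and preserving $\bP^2$, reading off the contributions of the fixed loci; I expect the outcome to be precisely $(1+q)^2$. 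Assembling the two factors yields the stated identity.

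The main obstacle is twofold. The first part, showing that no higher relative multiplicity and no additional curve class in $\bP^3$ contribute, is the same vanishing mechanism underlying Theorems \ref{pt0} and \ref{pt1} and should follow from the corresponding localization and dimension bounds. The genuinely delicate part is the exact evaluation of the local $(\bP^3,\bP^2)$ series: embedded points distributed along the line and clustering near the relative divisor produce the nontrivial $q$-dependence, and it is in the careful analysis of these degenerate contributions—and in the simultaneous verification that the $(\tilde X,E)$ relative invariant collapses to the absolute invariant in class $p^!\beta-e$—that the factor must be pinned down as $(1+q)^2$ rather than a bare $(1+q)$.
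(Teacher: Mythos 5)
Your first degeneration (deformation to the normal cone of $P$, special fibre $\tilde X\cup_E\bP^3$, the dimension count forcing $\eta=(1,[pt])$ and the splitting $p^!\beta-e$ on the $\tilde X$ side) reproduces the paper's Lemma \ref{ptlemma3} faithfully. The gap is in what you do next. You assert that, because the relative condition $(1,\mathbbm 1)$ is ``minimal and transverse,'' the factor $Z_P\big(\tilde X/E;q|\prod_i\tau_{d_i}(p^*\gamma_i)|(1,\mathbbm 1)\big)_{p^!\beta-e}$ collapses to the absolute invariant $Z_P\big(\tilde X;q|\prod_i\tau_{d_i}(p^*\gamma_i)\big)_{p^!\beta-e}$. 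This is not a formal consequence of anything: since $(p^!\beta-e)\cdot E=1$, the class genuinely meets the relative divisor, and the relative and absolute theories of $\tilde X$ differ. The paper relates them by a second, independent application of the degeneration formula, degenerating $\tilde X$ along $E$ (Lemma \ref{ptlemma4}), which yields
\ban
Z_P\bigg(\tilde X;q|\prod\limits_{i}\tau_{d_i}(p^*\gamma_i)\bigg)_{p^!\beta-e}=Z_P\big(\tilde\bP^3/H;q||(1,[pt])\big)_{F}\cdot\frac{1}{q}\cdot Z_P\bigg(\tilde X/E;q|\prod\limits_{i}\tau_{d_i}(p^*\gamma_i)|(1,\mathbbm 1)\bigg)_{p^!\beta-e},
\nan
with its own a priori unknown local factor. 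Your proposal omits this second degeneration entirely, so the argument does not close: you are left with an unidentified relative invariant of $(\tilde X,E)$.

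The second, related issue is how the universal factor is evaluated. You propose to compute the relative series $Z_P(\bP^3/\bP^2;q|\tau_0([pt])|(1,[pt]))_L$ directly by localization and read off $(1+q)^2$; besides being substantially harder (relative localization with rubber contributions), this only gives the theorem if the $(\tilde X,E)$ factor really enters with coefficient $1$, which is exactly the unproved step above. The paper sidesteps both difficulties at once: dividing the two comparison lemmas cancels the unknown relative invariant of $(\tilde X,E)$, leaving a ratio of two local factors, and that ratio is pinned down by specializing to $X=\bP^3$, $\beta=L$, where only the absolute series $Z_P(\bP^3;q|\tau_0([pt])^2)_L=q(1+q)^2$ and $Z_P(\tilde\bP^3;q|\tau_0([pt]))_F=q$ need to be known. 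To repair your argument, either restore the second comparison lemma or prove independently that $Z_P(\tilde\bP^3/H;q||(1,[pt]))_F=q$.
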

\begin{theorem}\label{pt3}
Under the same assumptions as in Theorem \ref{pt1}, we have
\ban
Z_P\bigg(X;q|\tau_1([pt])\prod\limits_{i=1}^m\tau_{d_i}(\gamma_i)\bigg)_\beta=\frac{1}{2}(1-q^2)\cdot Z_P\bigg(\tilde X;q|\tau_0(-E^2)\prod\limits_{i=1}^m\tau_{d_i}(p^*\gamma_i)\bigg)_{p^!\beta-e}.
\nan
\end{theorem}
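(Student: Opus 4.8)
The plan is to run the degeneration argument already used for Theorems \ref{pt1} and \ref{pt2}, now with the insertion $\tau_1([pt])$ in place of $\tau_0([pt])$, and to isolate the effect of the single extra $\psi$-class. I would apply the degeneration formula for stable pair invariants to the deformation of $X$ to the normal cone of $P$, whose central fibre is $\tilde X\cup_E Q$, where $Q=\bP(\mathcal{N}_{P/X}\oplus\cO)\cong\bP^3$ is glued to the main component $\tilde X$ along the common divisor $E\cong\bP^2\subset\bP^3$. Since $[pt]$ is supported at $P$, the distinguished point flows into the bubble $Q$, so the descendent $\tau_1([pt])$ is necessarily carried by the $Q$-side of the formula, while the descendents $\tau_{d_i}(p^*\gamma_i)$ remain on the $\tilde X$-side exactly as in Theorem \ref{pt1}.

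In the degeneration formula the invariant becomes a sum over splittings of the curve class compatible with $\beta$ and over a relative, cohomology-weighted basis along $E$, of products of relative stable pair invariants of $(\tilde X,E)$ and $(Q,E)$. The first step is to cut this sum down to the surviving terms. By the vanishing of Theorem \ref{pt0}, bubble classes of $e$-degree larger than one do not contribute; by the point incidence $(p^!\beta-e)\cdot E=1$, the minimal bubble is a single line, so the only surviving splitting puts class $p^!\beta-e$ on $\tilde X$ and class $e$ on $Q$, with the two totalling $p^!\beta$. This reduces the problem to a single relative invariant of $(Q,E)=(\bP^3,\bP^2)$ carrying $\tau_1([pt])$, paired against a relative invariant of $(\tilde X,E)$ of class $p^!\beta-e$.

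The crux is the evaluation of the relative $(\bP^3,\bP^2)$ invariant with insertion $\tau_1([pt])$. I would compute it by $\bC^*$-virtual localization on $\bP^3$, equivalently through the rubber/expanded-target calculus, in direct analogy with the $\tau_0([pt])$ computation that produced the scalar $(1+q)^2$ in Theorem \ref{pt2}. The one extra $\psi$-class changes two things simultaneously: it replaces $(1+q)^2$ by $\tfrac{1}{2}(1-q^2)$, and---because $\tau_1$ carries one more unit of cohomological degree than can be absorbed by the point alone---it leaves a residual relative insertion along $E$ Poincar\'e dual to the line class $e$. I expect this localization computation to be the main obstacle, since one must track the $\psi$-class carefully through the expanded targets and verify that the only nonvanishing rubber integral is the degree-one one, producing both the scalar $\tfrac{1}{2}(1-q^2)$ and the correct relative weight.

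Finally, I would reassemble the pieces. The residual relative insertion along $E$, when pushed onto the main component, is identified with $\tau_0(-E^2)$ via the relation $-E^2=e$ in $H_2(\tilde X,\bQ)$. The relative $(\tilde X,E)$ invariant then coincides, by the relative-to-absolute comparison already used for Theorems \ref{pt1} and \ref{pt2}, with the absolute invariant $Z_P\big(\tilde X;q\mid\tau_0(-E^2)\prod_{i=1}^m\tau_{d_i}(p^*\gamma_i)\big)_{p^!\beta-e}$, and the scalar $\tfrac{1}{2}(1-q^2)$ factors out, yielding the stated identity.
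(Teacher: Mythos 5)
Your skeleton (degenerate to the normal cone of $P$, use the dimension constraint to isolate a single weighted partition, peel off a universal bubble factor) is the same as the paper's, but two load-bearing steps are asserted where real work is needed, and one of them is wrong as stated. The main gap is the relative-to-absolute conversion on the $\tilde X$ side. Because $\tau_1([pt])$ carries one more unit of dimension than $\tau_0([pt])$, the surviving relative weight here is $(1,[L])$ with $[L]$ a degree-two class on $E\cong\bP^2$, not $(1,\mathbbm 1)$ as in Theorem \ref{pt2}; so the comparison "already used for Theorems \ref{pt1} and \ref{pt2}" does not apply. The relative invariant $Z_P\big(\tilde X/E;q|\prod_i\tau_{d_i}(p^*\gamma_i)|(1,[L])\big)_{p^!\beta-e}$ does not simply \emph{coincide} with $Z_P\big(\tilde X;q|\tau_0(-E^2)\prod_i\tau_{d_i}(p^*\gamma_i)\big)_{p^!\beta-e}$: relating them requires a second degeneration of $\tilde X$ along $E$, in which the insertion $\tau_0(-E^2)$ (supported on $E$) flows to the bubble $\tilde\bP^3$, and this produces its own nontrivial universal factor $Z_P\big(\tilde\bP^3/H;q|\tau_0(-E^2)|(1,[L])\big)_F\cdot q^{-1}$. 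This is exactly the paper's Lemma \ref{ptlemma6}, which your proposal omits by silently setting that factor equal to $1$.

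Relatedly, the scalar $\frac12(1-q^2)$ is not the value of the single relative $(\bP^3,H)$ invariant with $\tau_1([pt])$; it is the \emph{ratio} of the two bubble factors coming from Lemmas \ref{ptlemma5} and \ref{ptlemma6}. The paper never computes either relative invariant directly: it specializes both lemmas to $X=\bP^3$, $\beta=L$, cancels the common $(\tilde X/E)$-factor, and reduces everything to the two absolute degree-one evaluations $Z_P\big(\bP^3;q|\tau_1([pt])\tau_0([L])\big)_L=\frac12q(1-q^2)$ and $Z_P\big(\tilde\bP^3;q|\tau_0(-E^2)\tau_0([L])\big)_F=q$. Your proposed direct localization in the expanded/rubber geometry is in principle possible but is the hard computation the ratio trick is designed to avoid, and you have only asserted its outcome. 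Two smaller points: in stable pair theory $\tau_d(\gamma)$ is defined via $\mathrm{ch}_{2+d}(\bF)$, not via cotangent $\psi$-classes, so the "one extra $\psi$-class" framing is borrowed from the wrong theory; and the restriction to bubble classes meeting $E$ once follows from the dimension constraint on $\eta$ (forcing $|\eta|=1$, $\beta_1=L$), not from Theorem \ref{pt0}, which concerns absolute invariants of $\tilde X$ in classes $p^!\beta+ke$.
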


We also consider blow-up along a curve.

\begin{theorem}\label{curve0}
Let $p:\tilde X\rightarrow X$ be the blow-up along an irreducible, nonsingular embedded curve $C$ with $\int_Cc_1(X)\geqslant 0$. Suppose  that $\gamma_1,\cdots,\gamma_m\in H^{*}(X, {\mathbb Q})$ have supports away from $C$, and $d_1,\cdots,d_m\in\bZ_{\geqslant 0}$. Then for $\beta\in H_2(X, {\mathbb Z})$ and $k\in\bZ_{>0}$, we have
\ban
Z_P\bigg(\tilde X;q|\prod\limits_{i=1}^m\tau_{d_i}(p^*\gamma_i)\bigg)_{p^!\beta+ke}=0.
\nan
\end{theorem}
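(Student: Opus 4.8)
The plan is to follow the template of the point-blow-up vanishing in Theorem \ref{pt0}, adapting it to the positive-dimensional center $C$. The geometric content of the hypotheses is that the exceptional divisor $E=\bP(N_{C/X})$ is a $\bP^1$-bundle over $C$ with $N_{E/\tilde X}=\cO_E(-1)$, that the fiber class $e$ satisfies $E\cdot e=-1$ and $p_*e=0$, and that the insertions $p^*\gamma_i$, having support away from $C$, vanish on a neighborhood of $E$; hence the descendents $\tau_{d_i}(p^*\gamma_i)$ are invisible near the exceptional locus, while the class $p^!\beta+ke$ carries positive fiber degree $k$ over $C$. First I would record that the virtual dimension in class $p^!\beta+ke$ equals $\int_\beta c_1(T_X)+k$, using $c_1(T_{\tilde X})=p^*c_1(T_X)-E$ together with $E\cdot p^!\beta=0$ and $E\cdot e=-1$; the extra $+k$ is the numerical shadow of the freedom to move curve components in the fibers of $E\to C$, and signals where the vanishing must come from.

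Next I would degenerate $\tilde X$ to the normal cone of $E$, producing a family whose special fiber is $\tilde X\cup_E R$ with $R=\bP(\cO_E(-1)\oplus\cO)$ the projective completion of $N_{E/\tilde X}$; here $R$ is a Hirzebruch-surface ($\bF_1$) bundle over $C$ and carries a fiberwise $\bC^*$-action scaling the $\cO_E(-1)$-direction. Applying the degeneration formula expresses $Z_P(\tilde X;q\,|\,\prod\tau_{d_i}(p^*\gamma_i))_{p^!\beta+ke}$ as a sum, over splittings of the curve class and over a basis of relative boundary insertions along $E$, of products of relative stable pair invariants of $(\tilde X,E)$ with relative invariants of $(R,E_\infty)$. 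Because each $p^*\gamma_i$ vanishes near $E$ and hence off $R$, all the descendent insertions are confined to the $(\tilde X,E)$-factor and the $R$-factor carries none. The key reduction is then that the positive fiber degree $k$ must be absorbed by curve components mapping into the exceptional region, i.e.\ it appears as a degenerate contribution routed, through the relative formula, onto $(R,E_\infty)$.

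The crux, and the hard part, is to show that these relative invariants of $(R,E_\infty)$ of positive fiber degree and with no insertions vanish. Here I would run virtual localization with respect to the fiberwise $\bC^*$-action on $R$: since there are no insertions, the integrand is the pure fixed-point contribution, and the fixed loci are built from stable pairs on the zero and infinity sections together with fiberwise data varying over $C$. Each fixed locus contributes an integral over a cycle fibered over $C$ of the equivariant Euler class of the virtual normal bundle twisted by the obstruction sheaf, and the hypothesis $\int_Cc_1(X)\geqslant 0$ is precisely what forces this class to have degree exceeding the cycle dimension along $C$ (equivalently, the obstruction bundle acquires a positive-degree factor from $H^1$ of the fiberwise line bundle over $C$), so every fixed-point integral vanishes for degree reasons. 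The main obstacle is thus the bookkeeping of how the fiber class $e$ distributes across the two components and the verification that $\int_Cc_1(X)\geqslant 0$ uniformly forces this degree vanishing on $R$; once that local vanishing is established, the degeneration formula collapses all $k>0$ terms to zero and the theorem follows.
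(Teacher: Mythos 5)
Your setup---degenerating $\tilde X$ to the normal cone of $E$ so that the special fiber is $\tilde X\cup_E R$ with $R=\bP_E(N_E\oplus\cO_E)$, applying the degeneration formula, noting that all descendent insertions land on the $(\tilde X,E)$ factor, and observing that the fiber degree $k$ is forced onto the $R$ factor---is exactly the paper's. The divergence is at the step you yourself flag as ``the crux'': you propose to establish the vanishing of the relevant relative invariants of $(R,D_\infty)$ by $\bC^*$-localization, and this part is both unexecuted and misdirected. First, those invariants are not ``with no insertions'': every term of the degeneration formula carries a relative Nakajima insertion $\epsilon^*C_\eta$ along $D_\infty$, and its degree must enter the accounting. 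Second, localization is the wrong tool for this kind of vanishing: individual fixed loci generically contribute nonzero equivariant quantities, and the assertion that each fixed-point integral ``vanishes for degree reasons'' because $\int_Cc_1(X)\geqslant 0$ is not an argument; making it precise would require the full relative/rubber localization formalism for stable pairs on $R$, which is far more machinery than the statement needs and is nowhere set up in your sketch.

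The gap is real but easily closed, and the closure is what the paper actually does: a pure virtual-dimension count, with no localization at all. For a term indexed by $(\eta,\beta_1,\beta_2)$ to contribute, the constraint $v\textrm{dim}_\bC P_n(R/D_\infty,\beta_1)+v\textrm{dim}_\bC P_n(\tilde X/E,\beta_2)=v\textrm{dim}_\bC P_n(\tilde X,p^!\beta+ke)+2|\eta|$ must be compatible with the insertion degrees. Computing $c_1(R)$ via the Euler sequence and using $\beta_1\cdot D_\infty=|\eta|$ and $\beta_1\cdot E=-k$ gives $\int_{\beta_1}c_1(R)=\int_{(\pi_E\circ\pi)_*\beta_1}c_1(X)|_C+3|\eta|+k$, and the constraint reduces to $\frac 12\sum_{i}\textrm{deg}\,\delta^{j_i}+\int_{(\pi_E\circ\pi)_*\beta_1}c_1(X)|_C+2|\eta|+k=\ell(\eta)$. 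Since $(\pi_E\circ\pi)_*\beta_1$ is a nonnegative multiple of $[C]$ and $\int_Cc_1(X)\geqslant 0$, the left-hand side is at least $2|\eta|+k>|\eta|\geqslant\ell(\eta)$, so no $\eta$ works and every term vanishes. Equivalently, in your formulation: $Z_P(R/D_\infty;q||\eta)_{\beta_1}=0$ already because $\textrm{deg}_\bC C_\eta\leqslant|\eta|+\ell(\eta)\leqslant 2|\eta|<3|\eta|+k\leqslant\int_{\beta_1}c_1(R)$. Your opening observation that the virtual dimension jumps by $k$ was pointing at exactly this; you should follow it through the degeneration formula rather than detour into localization.
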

\begin{theorem}\label{curve1}
Let $p:\tilde X\rightarrow X$ be the blow-up along an irreducible, nonsingular embedded curve $C$ with $\int_Cc_1(X)>0$. Suppose  that $\gamma_1,\cdots,\gamma_m\in H^{*}(X, {\mathbb Q})$ have supports away from $C$, and $d_1,\cdots,d_m\in\bZ_{\geqslant 0}$. Then for nonzero $\beta\in H_2(X, {\mathbb Z})$, we have
\ban
 Z_P\bigg(X;q|\prod\limits_{i=1}^m\tau_{d_i}(\gamma_i)\bigg)_\beta =  Z_P\bigg(\tilde X;q|\prod\limits_{i=1}^m\tau_{d_i}(p^*\gamma_i)\bigg)_{p^!\beta}.
\nan
\end{theorem}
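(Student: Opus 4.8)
The plan is to derive the identity from the degeneration formula applied to the deformation of $X$ to the normal cone of $C$. Concretely, I would take $W=\mathrm{Bl}_{C\times\{0\}}(X\times\bA^1)$ with its projection to $\bA^1$: the general fiber is $X$, while the central fiber is the simple normal crossing union $\tilde X\cup_E R$, where $R=\bP(N_{C/X}\oplus\cO_C)$ is the $\bP^2$-bundle over $C$ arising as the exceptional divisor of $W$, glued to $\tilde X$ along $E=\bP(N_{C/X})$. Because $\gamma_1,\dots,\gamma_m$ have supports away from $C$, the classes $p^*\gamma_i$ have supports away from $E$ and hence restrict to $0$ on $R$; under the degeneration formula every descendent insertion $\tau_{d_i}(p^*\gamma_i)$ is therefore distributed entirely to the $\tilde X$-factor, and $R$ carries only relative boundary conditions along $E$.

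Next I would spell out the stable-pair degeneration formula, writing $Z_P(X;q\,|\,\prod_i\tau_{d_i}(\gamma_i))_\beta$ as a sum over splittings $(\beta_1,\beta_2)$ of the curve class and cohomology-weighted partitions $\eta$ of the contact order along $E$, of products
\[
Z_P^{\mathrm{rel}}\!\big(\tilde X,E;q\,\big|\,{\textstyle\prod_i}\tau_{d_i}(p^*\gamma_i)\,\big|\,\eta\big)_{\beta_1}\cdot Z_P^{\mathrm{rel}}\!\big(R,E;q\,\big|\,\eta^\vee\big)_{\beta_2},
\]
weighted by the usual gluing factors. The distinguished term is $\eta=\varnothing$, $\beta_1=p^!\beta$, $\beta_2=0$: here $(p^!\beta)\cdot E=0$, so the empty contact condition is consistent, the $\tilde X$-curve is disjoint from $E$, its relative invariant equals the absolute invariant $Z_P(\tilde X;q\,|\,\prod_i\tau_{d_i}(p^*\gamma_i))_{p^!\beta}$, and the degree-zero $R$-factor contributes the trivial factor $1$. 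The theorem thus reduces to showing that every remaining term, in which $\beta_2\neq 0$ or $\eta\neq\varnothing$, vanishes.

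To establish this vanishing I would compute the relative invariants of $(R,E)$ by virtual localization with respect to the fiberwise $\bC^*$-action on $R=\bP(N_{C/X}\oplus\cO_C)$ that fixes $E$ and the complementary section. The fixed loci of the expanded moduli are built from stable pairs concentrated over $C$ together with degenerate (rubber) contributions propagating along the $\bP^1$-direction, and their virtual normal bundles are expressible through $N_{C/X}$ and $\cO_C$. I expect the hypothesis $\int_C c_1(X)>0$ to force the corresponding equivariant weights to be strictly negative, so that all contributions with $\beta_2\neq 0$ or $\eta\neq\varnothing$ vanish; the same localization analysis, run for the exceptional-class contributions, is what underlies the companion vanishing of Theorem \ref{curve0}. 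Re-summing then leaves only the distinguished term, which is the asserted identity.

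The main obstacle will be precisely this localization computation on $R$: identifying the fixed loci of the rubber and expanded moduli, expressing their virtual normal bundles in terms of $N_{C/X}$, and verifying that strict positivity $\int_C c_1(X)>0$—as opposed to the mere nonnegativity sufficient in Theorem \ref{curve0}—is exactly the threshold that annihilates every degenerate contribution except the trivial one. Controlling the borderline fiber-class terms, which survive when $\int_C c_1(X)=0$, is the delicate point that dictates the strict inequality in the hypothesis.
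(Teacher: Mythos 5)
Your setup (deformation to the normal cone of $C$, degeneration formula, isolating the term $\eta=\varnothing$, $\beta_1=p^!\beta$) matches the first half of the paper's proof, but there is a genuine gap at the step where you claim that the distinguished term equals the absolute invariant of $\tilde X$. The degeneration formula produces the \emph{relative} partition function $Z_P\big(\tilde X/E;q\,|\,\prod_i\tau_{d_i}(p^*\gamma_i)\,|\,\varnothing\big)_{p^!\beta}$, and the fact that $(p^!\beta)\cdot E=0$ does \emph{not} imply that stable pairs in this class are disjoint from $E$: the support curve may have components contained in $E$, so the relative and absolute moduli spaces are genuinely different. The paper closes this gap with a second, independent application of the degeneration formula (Lemma \ref{curvelemma2}): one degenerates $\tilde X$ to the normal cone of $E$, obtaining $\tilde X\cup_E\bP_E(N_E\oplus\cO_E)$, and runs the same dimension analysis to show that $Z_P(\tilde X;q\,|\,\cdots)_{p^!\beta}=Z_P(\tilde X/E;q\,|\,\cdots\,|\,\varnothing)_{p^!\beta}$. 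Without this second degeneration your argument only proves the absolute-to-relative comparison on the $X$ side, not the theorem.

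The second issue is your proposed vanishing mechanism for the terms with $\eta\neq\varnothing$ or $\beta_2\neq 0$. Negative equivariant weights do not by themselves annihilate localization contributions, and no localization is needed here: the paper's argument is a pure dimension count. The constraint coming from matching virtual dimensions in the degeneration formula reads
\begin{equation*}
\frac{1}{2}\sum_{i=1}^{\ell(\eta)}\deg\delta^{j_i}+\int_{\pi_*\beta_1}c_1(X)|_C+2|\eta|=\ell(\eta),
\end{equation*}
and since $\ell(\eta)\leqslant|\eta|$ and every term on the left is nonnegative (using $\int_Cc_1(X)\geqslant 0$), one is forced to $\eta=\varnothing$ and $\int_{\pi_*\beta_1}c_1(X)|_C=0$; strict positivity $\int_Cc_1(X)>0$ then kills the section-direction classes with $\pi_*\beta_1\neq 0$, leaving only $\beta_1=0$. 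This is exactly the role of the strict inequality you correctly sensed, but it enters through the dimension constraint, not through equivariant weights, and no analysis of rubber moduli or virtual normal bundles is required.
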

\begin{theorem}\label{curve2}
Let $p:\tilde X\rightarrow X$ be the blow-up along an irreducible, nonsingular embedded curve $C$ with $\int_Cc_1(X)>1$. Suppose  that $\gamma_1,\cdots,\gamma_m\in H^{*}(X, {\mathbb Q})$ have supports away from $C$, and $d_1,\cdots,d_m\in\bZ_{\geqslant 0}$. Then for nonzero $\beta\in H_2(X, {\mathbb Z})$, we have
\ban
 Z_P\bigg(X;q|\tau_0([C])\prod\limits_{i=1}^m\tau_{d_i}(\gamma_i)\bigg)_\beta=(1+q)\cdot Z_P\bigg(\tilde X;q|\prod\limits_{i=1}^m\tau_{d_i}(p^*\gamma_i)\bigg)_{p^!\beta-e}.
\nan
\end{theorem}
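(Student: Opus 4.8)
The plan is to run the degeneration-to-the-normal-cone argument already underlying Theorems \ref{curve0} and \ref{curve1}, but now to carry along the extra descendent $\tau_0([C])$ and to isolate its effect as a single \emph{degenerate contribution} responsible for the factor $(1+q)$ and the shift by $-e$. Concretely, I would blow up $X\times\bA^1$ along $C\times\{0\}$ to obtain a semistable degeneration $W\to\bA^1$ whose general fibre is $X$ and whose central fibre is $\tilde X\cup_E Q$, where $Q=\bP(N_{C/X}\oplus\cO)$ is the projective completion of the normal cone and $E=\bP(N_{C/X})$ appears simultaneously as the section at infinity of $Q\to C$ and as the exceptional divisor of $p:\tilde X\to X$. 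The left-hand invariant is then recovered, via the degeneration formula for stable pairs, from relative invariants of $(\tilde X,E)$ and $(Q,E)$.

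First I would extend the insertions over $W$. Since $\gamma_1,\dots,\gamma_m$ have supports away from $C$, their extensions are disjoint from $Q$ and restrict to $p^*\gamma_i$ on $\tilde X$, so the whole product $\prod_i\tau_{d_i}(p^*\gamma_i)$ is forced onto the $\tilde X$-side. The class $[C]$, by contrast, is concentrated along $Q$ in the central fibre, so $\tau_0([C])$ is forced onto the $(Q,E)$-side, where it imposes incidence of the stable pair with the exceptional locus. Applying the degeneration formula and matching curve classes across $E$, the incidence forces the $\tilde X$-side to carry class $p^!\beta-e$, which satisfies $(p^!\beta-e)\cdot E=1$ since $e\cdot E=-1$, together with a single relative marking, while the $(Q,E)$-side carries one fibre line of $Q\to C$ meeting $E$ at that marking.

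It then remains to assemble the two factors into the claimed identity. The relative $(\tilde X,E)$-factor in class $p^!\beta-e$, summed against the gluing along $E$, reconstitutes the absolute invariant $Z_P(\tilde X;q|\prod_i\tau_{d_i}(p^*\gamma_i))_{p^!\beta-e}$ by the same relative-to-absolute mechanism used to prove Theorem \ref{curve1}. For the remaining $(Q,E)$-factor I would use the fibrewise $\bC^*$-action on $Q\to C$ fixing $E$ to reduce, by virtual localization, to a computation over $C$ of a single degree-one exceptional curve weighted by $\tau_0([C])$ and one relative point. My expectation is that this local contribution equals $(1+q)^{\dim}$ with $\dim$ the fibre dimension of $E\to C$, here equal to $1$, giving exactly $(1+q)$; this is consistent with the point-blow-up case underlying Theorem \ref{pt2}, where $E\to\{P\}$ has two-dimensional fibre $\bP^2$ and the factor is $(1+q)^2$.

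The main obstacle is twofold: the explicit localization evaluation of the $(Q,E)$-factor, and the vanishing of all higher splittings. Terms in which the $(Q,E)$-side absorbs two or more exceptional fibre lines, or carries extra relative markings, must contribute zero, and this is precisely where the hypothesis $\int_Cc_1(X)>1$ enters, through a virtual-dimension estimate on the rubber moduli over $C$ that kills every contribution beyond the leading one. I would obtain this by the same degenerate-contribution analysis as in Theorems \ref{curve0} and \ref{curve1}, with the positivity threshold raised by exactly one unit to absorb the single additional exceptional line produced by the $-e$ shift.
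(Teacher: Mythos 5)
Your strategy is the same as the paper's: deform $X$ to the normal cone of $C$, use the dimension constraint in the degeneration formula to isolate the unique contributing splitting (one fibre line $F$ on the $(Q,D_\infty)$-side carrying $\tau_0([C])$, glued along a single relative point, with all $\tau_{d_i}(p^*\gamma_i)$ pushed to the $(\tilde X,E)$-side in class $p^!\beta-e$), and evaluate the resulting local factor as $(1+q)$. Your value for the local factor is correct, though you only conjecture it by analogy with Theorem \ref{pt2}; the paper evaluates $Z_P\big(\bP_C(N_C\oplus\cO_C)/D_\infty;q|\tau_0([C])|(1,[pt])\big)_F=q(1+q)$ by the degenerate-contribution formula (4.2) of \cite{PT}, which combines with the gluing factor $1/q$ to give $(1+q)$.

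The genuine gap is the relative-to-absolute step on the $\tilde X$-side. You assert that $Z_P\big(\tilde X/E;q|\prod_i\tau_{d_i}(p^*\gamma_i)|(1,\mathbbm 1)\big)_{p^!\beta-e}$ reconstitutes the absolute invariant ``by the same relative-to-absolute mechanism used to prove Theorem \ref{curve1}.'' That mechanism (Lemma \ref{curvelemma2}) works only because $p^!\beta\cdot E=0$, which forces the empty relative condition and hence an equality of relative and absolute partition functions with no correction. Here $(p^!\beta-e)\cdot E=1$, so degenerating $\tilde X$ along $E$ necessarily produces a nontrivial relative condition $(1,[pt])$ and a second local factor on the $\bP_E(N_E\oplus\cO_E)$-side; the absolute and relative invariants agree only because that factor, $Z_P\big(\bP_E(N_E\oplus\cO_E)/D_\infty;q|\,|(1,[pt])\big)_F\cdot\frac{1}{q}$, happens to equal $q\cdot\frac{1}{q}=1$. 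This requires a second degeneration and a second local evaluation (the paper's Lemma \ref{curvelemma4} plus another application of (4.2) of \cite{PT}); it is not automatic and is absent from your argument. A smaller imprecision: the hypothesis $\int_Cc_1(X)>1$ is not what kills splittings with several fibre lines or several relative markings (those already die for $\int_Cc_1(X)\geqslant 0$ by the inequality $2|\eta|\leqslant 1+\ell(\eta)$); it is needed precisely to exclude the $\eta=\emptyset$ term in which the $Q$-side carries a class with $\int_{\pi_*\beta_1}c_1(X)|_C=1$.
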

\begin{remark}
If deg$\gamma_i>2$, then $\gamma_i$ has support away from $C$.
\end{remark}



The key tool used in this paper is the degeneration formula \cite{IP, Li, LW, LR, MPT}. Degeneration formula is powerful in the study of structures of Gromov-Witten, Donaldson-Thomas and stable pair theories \cite{HLR, LHH,  MOOP, OP, PP1, PP2, PP3, PP4, PP5}. In this paper, the blow-ups of $X$ can be described in terms of semi-stable degenerations of $X$, and we use degeneration formula to express invariants of $X$ and $\tilde X$ in terms of relative invariants of $(\tilde X, E)$. Then we use virtual localization \cite{GP} and degenerate contribution computation \cite{PT} to compute the relevant coefficients to obtain our results.

In \cite{HHKQ}, W. He, J. Hu, X. Qi and the author have obtained several blow-up formulae for all genera Gromov-Witten invariants for symplectic manifolds in real dimension six. Assuming GW/P corrspondence, many of the results of this paper can be derived from those of \cite{HHKQ}. Moreover, the corresponding results also hold in Donaldson-Thomas theory (except Theorem \ref{curve2}). The reason behind the similarity of blow-up formulae for Gromov-Witten, Donaldson-Thomas and stable pair invariants is that the behavior of these invariants under degeneraton is similar. Our blow-up formulae give positive evidence for GW/DT/P correspondence. Also, based on known results, our blow-up formulae give partial GW/DT/P correspondence for projective $3$-folds not necessarily toric or complete intersections in products of projective spaces.

The author is not able to prove the corresponding result of Theorem \ref{curve2} in Donaldson-Thomas theory. This is because in Donaldson-Thomas theory, free points are allowed to move in the whole variety, which makes the degeneration contribution computation difficult.

In \cite{HL}, J. Hu and W.-P. Li have studied the change of Donaldson-Thomas invarians under ordinary flops and extremal small transitions via degeneration formua. We can also study the change of stable pair invariants under these surgeries, using exactly the same arguments as in \cite{HL} to obtain similar results.

The rest of the paper is arranged as follows. In Section 2, we briefly review basic materials of absolute/relative stable pair invariants and the degeneration formula. In Section 3, we consider the case of blow-up at a point. In Section 4, we consier the case of blow-up along a curve.

\section{Preliminaries}

In this section, we briefly review absolute/relative stable pair invariants and the
degeneration formula and fix notations throughout.  We refer readers to \cite{LW, MPT, PT} for details.

A stable pair $(F,s)$ on $X$ consists of a pure sheaf $F$ on $X$ supported on a (possibly disconnected) Cohen-Macaulay curve and a section $s\in H^0(X,F)$ with zero dimensional cokernel. For $n\in\bZ$ and nonzero $\beta\in H_2(X,\bZ)$, let $P_n(X,\beta)$ be the moduli space of stable pairs $(F,s)$ with $\chi(F)=n$ and $[F]=\beta$. From the deformation theory of complexes in the derived category, the moduli space $P_n(X,\beta)$ carries a virtual fundamental class.

For $d\in\bZ_{\geqslant 0}$ and $\gamma\in H^*(X,\bZ)$, the descendant insertion $\tau_d(\gamma)$ is defined as follows. Let 
\ban
\pi_X:X\times P_n(X,\beta)&\rightarrow&X,\\
\pi_P:X\times P_n(X,\beta)&\rightarrow&P_n(X,\beta)
\nan
be tautological projections. Let $\bF$ be the universal sheaf over $X\times P_n(X,\beta)$. The operation
\ban
\pi_{P*}\bigg(\pi_X^*(\gamma)\cdot\textrm{ch}_{2+d}(\bF)\cap\pi_P^*(\cdot)\bigg):H_*(P_n(X,\beta),\bZ)\rightarrow H_*(P_n(X,\beta),\bZ)
\nan
is the action of $\tau_d(\gamma)$. The stable pair invariants with descendant insertions are defined as the virtual integration
\ban
\<\prod\limits_{i=1}^m\tau_{d_i}(\gamma_i)\>_{n,\beta}&=&\int_{P_n(X,\beta)}\prod\limits_{i=1}^m\tau_{d_i}(\gamma_i)\bigg([P_n(X,\beta)]^{vir}\bigg),
\nan
where $d_1\cdots,d_m\in\bZ_{\geqslant 0}$, and $\gamma_1,\cdots,\gamma_m\in H^*(X,\bZ)$. Denote the partition function of stable pair invariants as
\ban
Z_P\bigg(X;q|\prod\limits_{i=1}^m\tau_{d_i}(\gamma_i)\bigg)_\beta=\sum\limits_{n\in\bZ}\<\prod\limits_{i=1}^m\tau_{d_i}(\gamma_i)\>_{n,\beta}q^n.
\nan

Let $S\subset X$ be a nonsingular divisor. For $n\in\bZ$ and nonzero $\beta\in H_2(X,\bZ)$ with $\int_\beta[S]\geqslant 0$, let $P_n(X/S,\beta)$ be the moduli space of relative stable pairs, which carries a virtual fundamental class of degree $\int_\beta c_1(X)$. We have the following natural morphism
\ban
\epsilon:P_n(X/S,\beta)\rightarrow\textrm{Hilb}(S,\int_\beta[S])
\nan
The pull-back of cohomology classes of Hilb$(S,\int_\beta[S])$ gives relative insertions. 

Let us briefly recall Nakajima basis for the cohomology of Hilbert schemes of points of $S$. Let $\{\delta_i\}$ be a basis of $H^*(S,\bQ)$ with dual basis $\{\delta^i\}$. For any cohomology weighted partition $\eta$ with respect to the basis $\{\delta_i\}$, Nakajima constructed a cohomology class $C_\eta\in H^*(\textrm{Hilb}(S,|\eta|),\bQ)$. The Nakajima basis of  $H^*(\textrm{Hilb}(S,d),\bQ)$ is the set $\{C_\eta\}_{|\eta|=d}$. We refer readers to \cite{Na} for more details.

The partition function of relative stable pair invariants are defined by
\ban
Z_P\bigg(X/S;q|\prod\limits_{i=1}^m\tau_{d_i}(\gamma_i)|\eta\bigg)_\beta=\sum\limits_{n\in\bZ}q^n\int_{[P_n(X/S,\beta)]^{vir}}\prod\limits_{i=1}^m\tau_{d_i}(\gamma_i)\cdot\epsilon^*C_\eta.
\nan

Let $\pi : \chi \rightarrow\bA^1$ be a nonsingular $4$-fold over $\bA^1$ such that $\chi_t = \pi^{-1}(t) \cong X $ for $t\not= 0$ and $\chi_0$ is a union of two irreducible nonsingular projective $3$-folds $X_1$
and $X_2$ intersecting transversally along a nonsigular projective surface $S$.

Consider the natural inclusion maps
$$
  i_t: X=\chi_t \longrightarrow \chi,\,\,\,\,\,\,\,\,
  i_0:\chi_0\longrightarrow \chi,
$$
and the gluing map
$$
  g= (j_1,j_2) : X_1\coprod X_2\longrightarrow \chi_0.
$$
We have
$$
  H_2(X,\bZ)\stackrel{i_{t*}}{\longrightarrow}
  H_2(\chi,\bZ)\stackrel{i_{0_*}}{\longleftarrow}
  H_2(\chi_0,\bZ)\stackrel{g_*}{\longleftarrow} H_2(X_1,\bZ)\oplus
  H_2(X_2,\bZ),
$$
where $i_{0*}$ is an isomorphism since there exists a deformation retract from $\chi$ to $\chi_0$(see \cite{Cl}). Also, since the family $\chi\rightarrow\bA^1$ comes from a trivial family, it follows that each $\gamma\in H^*(X,\bQ) $ has
global liftings such that the restriction $\gamma(t)$ on $\chi_t$ is defined for all $t$.

The degeneration formula for stable pair theory expresses absolute invariants of $X$ via relative invariants of $(X_1,S)$ and $(X_2,S)$:
\ban
&&Z_P\bigg(X;q|\prod\limits_{i=1}^m\tau_{d_i}(\gamma_i)\bigg)_\beta\\
&=&\sum Z_P\bigg(X_1/S;q|\prod\limits_{i\in P_1}\tau_{d_i}(j_1^*\gamma_i(0))|\eta\bigg)_{\beta_1}\cdot\frac{(-1)^{|\eta|-\ell(\eta)}\fz(\eta)}{q^{|\eta|}}\cdot Z_P\bigg(X_2/S;q|\prod\limits_{i\in P_2}\tau_{d_i}(j_2^*\gamma_i(0))|\eta^\vee\bigg)_{\beta_2},
\nan
where $\fz(\eta)=|\textrm{Aut}(\eta)|\cdot\prod\limits_{i=1}^{\ell(\eta)}\eta_i$, $\eta^\vee$ is defined by taking the Poincar\'e duals of the cohomology weights of $\eta$, and the sum is over cohomology weighted partitions $\eta$, degree splittings $i_{t*}\beta=i_{0*}(j_{1*}\beta_1+j_{2*}\beta_2)$, and marking partitions $P_1\coprod P_2=\{1,\cdots,m\}$. In particular, if $(\eta,\beta_1,\beta_2)$ has nontrivial contribution in the degeneration formula, then we have the following dimension constraint:
\ban
v\textrm{dim}_\bC P_n(X_1/S,\beta_1)+v\textrm{dim}_\bC P_n(X_2/S,\beta_2)=v\textrm{dim}_\bC P_n(X,\beta)+2|\eta|.
\nan

\section{Formulae for blow-up at a point}

In this section, we consider blow-up at a point and prove Theorem \ref{pt0}, \ref{pt1}, \ref{pt2}, and \ref{pt3}. We always assume that total degrees of insertions match the virtual dimensions of the moduli spaces, since otherwise the required equalities are trivial.

Throughout this section, we let $H$ be the hyperplane at infinity in $\bP^3$, and $\tilde\bP^3$ is the blow-up of $\bP^3$ at a point not in $H$.

We first prove Theorem \ref{pt0}. Degenerate $\tilde X$ along $E$, and by the degeneration formula, we have
\begin{eqnarray}\label{dpt0}
&&Z_P\bigg(\tilde X;q|\prod\limits_{i=1}^m\tau_{d_i}(p^*\gamma_i)\bigg)_{p^!\beta+ke}\\
&=&\sum Z_P(\tilde\bP^3/H;q||\eta)_{\beta_1}\cdot\frac{(-1)^{|\eta|-\ell(\eta)}\fz(\eta)}{q^{|\eta|}}\cdot Z_P\bigg(\tilde X/E;q|\prod\limits_{i=1}^m\tau_{d_i}(p^*\gamma_i)|\eta^\vee\bigg)_{\beta_2}, \nonumber
\end{eqnarray}
where we have assumed that the class $p^*\gamma_i$ has support away from $E$. By our assumption that degrees match the virtual dimensions, we have
\ban
v\textrm{dim}_\bC P_n(\tilde X,p^!\beta)=\frac{1}{2}\sum\limits_{i=1}^{m}\gamma_i+\sum\limits_{i=1}^md_i-m.
\nan

Suppose that $(\eta=\{(\eta_i,\delta_{j_i})\}_{i=1}^{\ell(\eta)},\beta_1,\beta_2)$ has nonzero contribution in \eqref{dpt0}. Then
\ban
v\textrm{dim}_\bC P_n(\tilde\bP^3/H,\beta_1)&=&\int_{\beta_1}c_1(\tilde\bP^3),\\
v\textrm{dim}_\bC P_n(\tilde X/E,\beta_2)&=&\frac{1}{2}\sum_{i=1}^m deg\gamma_i+\sum_{i=1}^md_i+\frac{1}{2}\sum\limits_{i=1}^{\ell(\eta)}deg\delta^{j_i}-\ell(\eta)+|\eta|-m.
\nan
So by the dimension constraint,
\ban
\frac 12\sum\limits_{i=1}^{\ell(\eta)}deg\delta^{j_i}+\int_{\beta_1}c_1(\tilde{\bP}^3)-|\eta|=\ell(\eta).
\nan

Let $L\in H_2(\tilde\bP^3,\bZ)$ be the class of the total transform of a line in $\bP^3$. Then we have the following natural decomposition
\ban
H_2(\tilde\bP^3,\bZ)=\bZ F\oplus\bZ L.
\nan
We have the following constraint for $\beta_1$:
\ban
\left\{\begin{array}{lcl}\beta_1\cdot H&=&|\eta|,\\\beta_1\cdot E&=&-k.\end{array}\right.
\nan
So we have $\int_{\beta_1}c_1(\tilde\bP^3)=4|\eta|+2k$. Now the dimension constraint becomes
\ban
\frac 12\sum\limits_{i=1}^{\ell(\eta)}deg\delta^{j_i}+3|\eta|+2k=\ell(\eta).
\nan
We observe that no partition satisfies the dimension constraint, and this proves Theorem \ref{pt0}.

Next, we prove Theorem \ref{pt1}. We divide the proof of Theorem \ref{pt1} into two comparison lemmas of stable pair invariants.

\begin{lemma}\label{ptlemma1}
Under the assumptions as in Theorem \ref{pt1}, we have
\ban
Z_P\bigg(X;q|\prod\limits_{i=1}^m\tau_{d_i}(\gamma_i)\bigg)_\beta=Z_P\bigg(\tilde X/E;q|\prod\limits_{i=1}^m\tau_{d_i}(p^*\gamma_i)|\bigg)_{p^!\beta}.
\nan
\end{lemma}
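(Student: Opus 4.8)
The plan is to prove Lemma \ref{ptlemma1} by applying the degeneration formula to a single semistable degeneration of $X$ and showing that all but one of the resulting terms vanish. Concretely, I would take the deformation to the normal cone of the point $P\in X$: blow up $X\times\bA^1$ at $(P,0)$ to obtain a nonsingular $4$-fold $\chi\to\bA^1$ with $\chi_t\cong X$ for $t\neq 0$ and central fiber $\chi_0=\tilde X\cup_{\bP^2}\bP^3$, where the strict transform of $X\times\{0\}$ is the blow-up $\tilde X$, the exceptional divisor of the blow-up of $X\times\bA^1$ is $\bP^3$, and the two components meet transversally along $E\cong H\cong\bP^2$. This is exactly the degeneration setup of Section 2 with $X_1=\tilde X$ glued along $E$ and $X_2=\bP^3$ glued along the hyperplane at infinity $H$, so the degeneration formula applies and writes $Z_P(X;q|\prod_i\tau_{d_i}(\gamma_i))_\beta$ as a sum of products of relative invariants of $(\tilde X,E)$ and $(\bP^3,H)$.

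Next I would dispose of the insertions on the $\bP^3$ side. The projection $\chi\to X$ restricts to $p$ on $\tilde X$ and contracts $\bP^3$ to $P$, so a global lifting of $\gamma_i$ restricts to $p^*\gamma_i$ on $\tilde X$ and to $0$ on $\bP^3$ whenever $\deg\gamma_i>0$. Since every $\gamma_i\in H^{>0}(X,\bQ)$, any term in which some marking lands on the $\bP^3$ component carries a factor $\tau_{d_i}(0)=0$ and hence vanishes. Thus only the marking partition $P_1=\{1,\dots,m\}$, $P_2=\emptyset$ survives, and all descendants $\prod_i\tau_{d_i}(p^*\gamma_i)$ sit on the $(\tilde X,E)$ factor. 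A bookkeeping of curve classes (push forward the splitting relation by $\chi\to X$) then gives $p_*\beta_1=\beta$ together with $\beta_1\cdot E=|\eta|$, whence $\beta_1=p^!\beta-|\eta|\,e$ on $\tilde X$ and $\beta_2=|\eta|\,\ell$ on $\bP^3$, where $\ell$ is the line class.

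The heart of the argument is a dimension count forcing $\eta=\emptyset$. Using $c_1(\tilde X)=p^*c_1(X)-2E$, one gets $v\dim_{\bC}P_n(\tilde X/E,\beta_1)=\int_\beta c_1(X)-2|\eta|$, while the total codimension of the insertions on the $(\tilde X,E)$ factor is the descendant codimension $\sum_i(d_i+\tfrac12\deg\gamma_i-1)=\int_\beta c_1(X)$ (by the standing degree-matching hypothesis) plus the codimension $|\eta|-\ell(\eta)+\tfrac12\sum_i\deg\delta_{j_i}$ of the relative class $C_\eta$. Equating these, the term $\int_\beta c_1(X)$ cancels and I am left with $\ell(\eta)=3|\eta|+\tfrac12\sum_i\deg\delta_{j_i}$. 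Since $\ell(\eta)\leqslant|\eta|$ and the last sum is nonnegative, this forces $3|\eta|\leqslant\ell(\eta)\leqslant|\eta|$, hence $|\eta|=\ell(\eta)=0$, i.e. $\eta=\emptyset$. For $\eta=\emptyset$ we have $\beta_1=p^!\beta$, $\beta_2=0$, the gluing factor $\tfrac{(-1)^{|\eta|-\ell(\eta)}\fz(\eta)}{q^{|\eta|}}$ equals $1$, and the empty $(\bP^3,H)$-factor $Z_P(\bP^3/H;q||\emptyset)_0$ equals $1$ by convention; the unique surviving term is precisely $Z_P(\tilde X/E;q|\prod_i\tau_{d_i}(p^*\gamma_i)|)_{p^!\beta}$, which is the claim.

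The main obstacle I anticipate is making the two vanishing mechanisms airtight: verifying that the chosen global lifts genuinely restrict to $0$ on the $\bP^3$ bubble for all positive-degree classes (so that nothing survives on that side), and justifying both the normalization $Z_P(\bP^3/H;q||\emptyset)_0=1$ and the class bookkeeping $\beta_1=p^!\beta-|\eta|e$. Once these are in place, the elementary inequality $\ell(\eta)\leqslant|\eta|$ does all the real work, and no virtual localization or degenerate-contribution computation is needed here, in contrast to the proofs of Theorems \ref{pt2} and \ref{pt3}.
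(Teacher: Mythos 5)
Your proposal is correct and follows essentially the same route as the paper: degenerate to the normal cone of $P$ so that $\chi_0=\tilde X\cup_{E}\bP^3$, place all descendant insertions on the $(\tilde X,E)$ side, and use the dimension constraint together with $\ell(\eta)\leqslant|\eta|$ to force $\eta=\emptyset$. The only cosmetic difference is that you run the curve-class and $c_1$ bookkeeping on the $\tilde X$ side (via $\beta_1=p^!\beta-|\eta|e$ and $c_1(\tilde X)=p^*c_1(X)-2E$), whereas the paper computes $\int_{\beta_1}c_1(\bP^3)=4|\eta|$ on the $\bP^3$ side; both yield the same conclusion.
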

\begin{proof}
Degenerate $X$ at a point $P$, and by the degeneration formula, we have
\ba\label{dptlemma1}
&&Z_P\bigg(X;q|\prod\limits_{i=1}^m\tau_{d_i}(\gamma_i)\bigg)_\beta\\
&=&\sum Z_P\bigg(\bP^3/H;q||\eta\bigg)_{\beta_1}\cdot\frac{(-1)^{|\eta|-\ell(\eta)}\fz(\eta)}{q^{|\eta|}}\cdot Z_P\bigg(\tilde X/E;q|\prod\limits_{i=1}^m\tau_{d_i}(p^*\gamma_i)|\eta^\vee\bigg)_{\beta_2},\nonumber
\na
where we have assumed that the support of $\gamma_i$ is away from $P$. By our assumption that total degrees of insertions match the virtual dimensions of moduli spaces, we have
\ban
v\textrm{dim}_\bC P_n(X,\beta)&=&\frac{1}{2}\sum_{i=1}^m deg\gamma_i+\sum_{i=1}^md_i-m.
\nan

Suppose that $(\eta=\{(\eta_i,\delta_{j_i})\}_{i=1}^{\ell(\eta)},\beta_1,\beta_2)$ has nonzero contribution in \eqref{dptlemma1}. Then 
\begin{eqnarray*}
v\textrm{dim}_\bC P_n(\bP^3/H,\beta_1)&=&\int_{\beta_1}c_1(\bP^3),\\
v\textrm{dim}_\bC P_n(\tilde X/E,\beta_2)&=&\frac{1}{2}\sum_{i=1}^m deg\gamma_i+\sum_{i=1}^md_i+\frac{1}{2}\sum\limits_{i=1}^{\ell(\eta)}deg\delta^{j_i}-\ell(\eta)+|\eta|-m.
\end{eqnarray*}
So by the dimension constraint,
\ban
\frac 12\sum\limits_{i=1}^{\ell(\eta)}deg\delta^{j_i}+\int_{\beta_1}c_1(\bP^3)-|\eta|=\ell(\eta).
\nan
Note that $\beta_1\cdot H=|\eta|$, and hence $\beta_1=|\eta|L$, which implies that
\ban
\int_{\beta_1}c_1(\bP^3)=4|\eta|.
\nan
Now the dimension constraint becomes
\ban
\frac 12\sum\limits_{i=1}^{\ell(\eta)}deg\delta^{j_i}+3|\eta|=\ell(\eta).
\nan
So the dimension constraint holds only if $\eta=\emptyset$, which implies Lemma \ref{ptlemma1}.
\end{proof}

\begin{lemma}\label{ptlemma2}
Under the assumptions as in Theorem \ref{pt1}, we have
\ban
Z_P\bigg(\tilde X;q|\prod\limits_{i=1}^m\tau_{d_i}(p^*\gamma_i)\bigg)_{p^!\beta}=Z_P\bigg(\tilde X/E;q|\prod\limits_{i=1}^m\tau_{d_i}(p^*\gamma_i)|\bigg)_{p^!\beta}.
\nan
\end{lemma}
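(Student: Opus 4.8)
The plan is to repeat verbatim the degeneration used in the proof of Theorem \ref{pt0}, namely degenerate $\tilde X$ along $E$, but now with the curve class $p^!\beta$ in place of $p^!\beta+ke$. In other words, I would apply the degeneration formula \eqref{dpt0} specialized to $k=0$, obtaining
\ban
Z_P\bigg(\tilde X;q|\prod_{i=1}^m\tau_{d_i}(p^*\gamma_i)\bigg)_{p^!\beta}&=&\sum Z_P(\tilde\bP^3/H;q||\eta)_{\beta_1}\cdot\frac{(-1)^{|\eta|-\ell(\eta)}\fz(\eta)}{q^{|\eta|}}\cdot Z_P\bigg(\tilde X/E;q|\prod_{i=1}^m\tau_{d_i}(p^*\gamma_i)|\eta^\vee\bigg)_{\beta_2},
\nan
where, as before, each $p^*\gamma_i$ is supported away from $E$ so that every marked insertion is carried by the $\tilde X/E$ factor. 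The goal is to show that exactly one term of this sum survives, namely $\eta=\emptyset$ and $\beta_1=0$, and that this term reduces to the right-hand side of the Lemma.

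The main step is the dimension count, which I would run exactly as for Theorem \ref{pt0}. For any triple $(\eta=\{(\eta_i,\delta_{j_i})\}_{i=1}^{\ell(\eta)},\beta_1,\beta_2)$ with nonzero contribution, the degree splitting forces $\beta_1\cdot H=|\eta|$ and $\beta_1\cdot E=0$, the latter because $p^!\beta\cdot E=0$ (this is precisely the $k=0$ instance of $\beta_1\cdot E=-k$). Hence $\int_{\beta_1}c_1(\tilde\bP^3)=4|\eta|$, and the dimension constraint becomes
\ban
\frac12\sum_{i=1}^{\ell(\eta)}deg\delta^{j_i}+3|\eta|&=&\ell(\eta).
\nan
Since every part of $\eta$ is at least $1$ we have $\ell(\eta)\leqslant|\eta|$, and all summands on the left are nonnegative, so the equation forces $|\eta|=0$, i.e. $\eta=\emptyset$. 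This is the one genuine difference from Theorem \ref{pt0}: there the extra $+2k$ made the constraint unsatisfiable, whereas here $\eta=\emptyset$ is permitted.

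It remains to pin down the surviving term. With $\eta=\emptyset$ the splitting constraints read $\beta_1\cdot H=0$ and $\beta_1\cdot E=0$; writing $\beta_1=aF+bL$ in the basis $H_2(\tilde\bP^3,\bZ)=\bZ F\oplus\bZ L$ and using $F\cdot H=0$, $L\cdot H=1$, $F\cdot E=-1$, $L\cdot E=0$, I would conclude $b=0$ and then $a=0$, so $\beta_1=0$ and $\beta_2=p^!\beta$. The prefactor is then $1$ (as $\fz(\emptyset)=1$ and $|\eta|=\ell(\eta)=0$), and the $\tilde\bP^3/H$-factor $Z_P(\tilde\bP^3/H;q|\emptyset)_0$ is the identity contribution of the trivial curve class, equal to $1$; the sum therefore collapses to $Z_P(\tilde X/E;q|\prod_{i=1}^m\tau_{d_i}(p^*\gamma_i)|)_{p^!\beta}$. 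The routine part is the dimension bookkeeping copied from Theorem \ref{pt0}; the point requiring care, and the main obstacle, is the vanishing of the neck class, where I must confirm both that the degree splitting genuinely imposes $\beta_1\cdot E=0$ (so that a nonzero multiple $\beta_1=aF$ inside $E$ is excluded) and that the $\beta_1=0$, $\eta=\emptyset$ relative factor is normalized to $1$ rather than to a nontrivial degree-zero series. Once these two normalizations are settled, the collapse is immediate.
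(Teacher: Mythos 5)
Your argument is exactly the paper's proof of Lemma \ref{ptlemma2}: degenerate $\tilde X$ along $E$, impose the constraints $\beta_1\cdot H=|\eta|$ and $\beta_1\cdot E=0$ so that $\int_{\beta_1}c_1(\tilde\bP^3)=4|\eta|$, and read off from the dimension constraint $\frac12\sum\deg\delta^{j_i}+3|\eta|=\ell(\eta)$ that only $\eta=\emptyset$ (hence $\beta_1=0$) survives; your extra remarks on the normalization of the empty relative factor only make explicit what the paper leaves implicit. The one quibble is your intersection numbers $F\cdot H=0$, $F\cdot E=-1$, which correspond to taking $F$ to be the line in the exceptional $\bP^2$ rather than the fiber class $F$ of $\bP_H(\cO(1)\oplus\cO)$ used in the paper (for which $F\cdot H=F\cdot E=1$); either basis gives $\beta_1=0$, so the conclusion is unaffected.
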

\begin{proof}
Degenerate $\tilde X$ along $E$, and by the degeneration formula, we have
\begin{eqnarray}\label{dptlemma2}
&&Z_P\bigg(\tilde X;q|\prod\limits_{i=1}^m\tau_{d_i}(p^*\gamma_i)\bigg)_{p^!\beta}\\
&=&\sum Z_P(\tilde\bP^3/H;q||\eta)_{\beta_1}\cdot\frac{(-1)^{|\eta|-\ell(\eta)}\fz(\eta)}{q^{|\eta|}}\cdot Z_P\bigg(\tilde X/E;q|\prod\limits_{i=1}^m\tau_{d_i}(p^*\gamma_i)|\eta^\vee\bigg)_{\beta_2}, \nonumber
\end{eqnarray}
where we have assumed that the class $p^*\gamma_i$ has support away from $E$. By our assumption that degrees match the virtual dimensions, we have
\ban
v\textrm{dim}_\bC P_n(\tilde X,p^!\beta+ke)=\frac{1}{2}\sum\limits_{i=1}^{m}\gamma_i+\sum\limits_{i=1}^md_i-m.
\nan

Suppose that $(\eta=\{(\eta_i,\delta_{j_i})\}_{i=1}^{\ell(\eta)},\beta_1,\beta_2)$ has nonzero contribution in \eqref{dptlemma2}. Then
\ban
v\textrm{dim}_\bC P_n(\tilde\bP^3/H,\beta_1)&=&\int_{\beta_1}c_1(\tilde\bP^3),\\
v\textrm{dim}_\bC P_n(\tilde X/E,\beta_2)&=&\frac{1}{2}\sum_{i=1}^m deg\gamma_i+\sum_{i=1}^md_i+\frac{1}{2}\sum\limits_{i=1}^{\ell(\eta)}deg\delta^{j_i}-\ell(\eta)+|\eta|-m.
\nan
So by the dimension constraint,
\ban
\frac 12\sum\limits_{i=1}^{\ell(\eta)}deg\delta^{j_i}+\int_{\beta_1}c_1(\tilde{\bP}^3)-|\eta|=\ell(\eta).
\nan

Let $L\in H_2(\tilde\bP^3,\bZ)$ be the class of the total transform of a line in $\bP^3$. Then we have the following natural decomposition
\ban
H_2(\tilde\bP^3,\bZ)=\bZ F\oplus\bZ L.
\nan
We have the following constraint for $\beta_1$:
\ban
\left\{\begin{array}{lcl}\beta_1\cdot H&=&|\eta|,\\\beta_1\cdot E&=&0.\end{array}\right.
\nan
So we have $\beta_1=|\eta|L$, and hence $\int_{\beta_1}c_1(\tilde\bP^3)=4|\eta|$. Now the dimension constraint becomes
\ban
\frac 12\sum\limits_{i=1}^{\ell(\eta)}deg\delta^{j_i}+3|\eta|=\ell(\eta).
\nan
So the dimension constraint holds only if $\eta=\emptyset$, which implies Lemma \ref{ptlemma2}.
\end{proof}

The above comparison results give Theorem \ref{pt1}.

To prove Theorem \ref{pt2}, we need the following two comparison lemmas.

\begin{lemma}\label{ptlemma3}
Under the same assumptions as in Theorem \ref{pt2}, we have
\ban
&&Z_P\bigg(X;q|\tau_0([pt])\prod\limits_{i=1}^m\tau_{d_i}(\gamma_i)\bigg)_\beta\\
&=&Z_P\bigg(\bP^3/H;q|\tau_0([pt])|(1,[pt])\bigg)_L\cdot\frac{1}{q}\cdot Z_P\bigg(\tilde X/E;q|\prod\limits_{i=1}^m\tau_{d_i}(p^*\gamma_i)|(1,\mathbbm 1)\bigg)_{p^!\beta-e},
\nan
where $L$ is the class of a line.
\end{lemma}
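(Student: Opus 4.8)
The plan is to run the degeneration formula on the left-hand side and then to use the dimension constraint, together with the support hypotheses, to collapse the resulting sum to a single term. First I would degenerate $X$ at the point $P$: blowing up $X\times\bA^1$ at $(P,0)$ produces a semistable family whose central fiber is $\tilde X\cup_E\bP^3$, the two components meeting along $E\cong H\cong\bP^2$. Applying the degeneration formula to $Z_P(X;q|\tau_0([pt])\prod_{i=1}^m\tau_{d_i}(\gamma_i))_\beta$ writes it as a sum over weighted partitions $\eta$, degree splittings $i_{t*}\beta=i_{0*}(j_{1*}\beta_1+j_{2*}\beta_2)$ with $\beta_1\in H_2(\bP^3,\bZ)$ and $\beta_2\in H_2(\tilde X,\bZ)$, and markings $P_1\sqcup P_2$ of the index set $\{0,1,\dots,m\}$, where $0$ labels the insertion $\tau_0([pt])$.

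Next I would rigidify the markings by a good choice of global lifts. Since each $\gamma_i$ has support away from $P$ and the bubble $\bP^3$ lies over an arbitrarily small neighborhood of $P$, the restriction $j_1^*\gamma_i(0)$ vanishes, so every index $i\ge 1$ is forced into $P_2$; and choosing the lift of $[pt]$ whose limit lies in the interior of the $\bP^3$ component gives $j_2^*[pt](0)=0$, forcing $0\in P_1$. Thus the only surviving marking is $P_1=\{0\}$, $P_2=\{1,\dots,m\}$, so the $\bP^3$-factor carries exactly $\tau_0([pt])$ while the $\tilde X$-factor carries all of the $\tau_{d_i}(p^*\gamma_i)$.

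The heart of the argument is then a dimension count on the $\bP^3$ side. From $\beta_1\cdot H=|\eta|$ and $H_2(\bP^3,\bZ)=\bZ L$ we get $\beta_1=|\eta|L$, hence $v\textrm{dim}_\bC P_n(\bP^3/H,\beta_1)=4|\eta|$. Matching this against the codimension $2$ of $\tau_0([pt])$ and the codimension $(|\eta|-\ell(\eta))+\tfrac12\sum_i\deg\delta_{j_i}$ of the Nakajima class $C_\eta$ yields $\tfrac12\sum_i\deg\delta_{j_i}=3|\eta|-2+\ell(\eta)$. Since $\deg\delta_{j_i}\le 4$ on $\bP^2$ and $\ell(\eta)\le|\eta|$, this forces $|\eta|\le 1$; the case $\beta_1=0$ (i.e. $|\eta|=0$) is excluded because the positive-codimension class $\tau_0([pt])$ cannot be integrated over the $\beta_1=0$ relative space. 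Hence $|\eta|=\ell(\eta)=1$ and $\deg\delta_{j_1}=4$, so $\eta=(1,[pt])$, $\beta_1=L$, and consequently $\eta^\vee=(1,\mathbbm 1)$. The degree-splitting relation with $\beta_1=L$ then determines $\beta_2=p^!\beta-e$, which one sanity-checks via $\beta_2\cdot E=|\eta|=1$.

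Finally, evaluating the gluing factor at $\eta=(1,[pt])$ gives $(-1)^{|\eta|-\ell(\eta)}\fz(\eta)/q^{|\eta|}=1/q$, and assembling the three surviving factors produces exactly the asserted identity. I expect the main obstacle to be not the dimension count, which is routine, but the careful justification of the two vanishing claims underlying the marking rigidity: that the lift of $[pt]$ can be arranged to meet the central fiber in the interior of $\bP^3$, away from the gluing divisor, and that the support-away-from-$P$ hypothesis genuinely kills $j_1^*\gamma_i(0)$ at the level of the global liftings used in the degeneration formula.
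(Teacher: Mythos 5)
Your proposal is correct and follows essentially the same route as the paper: degenerate $X$ at $P$ into $\tilde X\cup_E\bP^3$ and use the dimension constraint (you phrase it via the weights $\delta_{j_i}$, the paper via the duals $\delta^{j_i}$, but the two are equivalent) to force $\eta=(1,[pt])$, $\beta_1=L$, $\beta_2=p^!\beta-e$, and gluing factor $1/q$. Your extra care about the marking partition and the global lifts is a point the paper only handles implicitly.
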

\begin{proof}
Degenerate $X$ at a point $P$, and by the degeneration formula, we have
\ba\label{dptlemma3}
&&Z_P\bigg(X;q|\tau_0([pt])\prod\limits_{i=1}^m\tau_{d_i}(\gamma_i)\bigg)_\beta\\
&=&\sum Z_P\bigg(\bP^3/H;q|\tau_0([pt])|\eta\bigg)_{\beta_1}\cdot\frac{(-1)^{|\eta|-\ell(\eta)}\fz(\eta)}{q^{|\eta|}}\cdot Z_P\bigg(\tilde X/E;q|\prod\limits_{i=1}^m\tau_{d_i}(p^*\gamma_i)|\eta^\vee\bigg)_{\beta_2},\nonumber
\na
where we have assumed that the support of $\gamma_i$ is away from $P$. By our assumption that total degrees of insertions match the virtual dimensions of moduli spaces, we have
\ban
v\textrm{dim}_\bC P_n(X,\beta)&=&\frac{1}{2}\sum_{i=1}^m deg\gamma_i+\sum_{i=1}^md_i+2-m.
\nan

Suppose that $(\eta=\{(\eta_i,\delta_{j_i})\}_{i=1}^{\ell(\eta)},\beta_1,\beta_2)$ has nonzero contribution in \eqref{dptlemma3}. Then 
\begin{eqnarray*}
v\textrm{dim}_\bC P_n(\bP^3/H,\beta_1)&=&\int_{\beta_1}c_1(\bP^3),\\
v\textrm{dim}_\bC P_n(\tilde X/E,\beta_2)&=&\frac{1}{2}\sum_{i=1}^m deg\gamma_i+\sum_{i=1}^md_i+\frac{1}{2}\sum\limits_{i=1}^{\ell(\eta)}deg\delta^{j_i}-\ell(\eta)+|\eta|-m.
\end{eqnarray*}
So by the dimension constraint,
\ban
\frac 12\sum\limits_{i=1}^{\ell(\eta)}deg\delta^{j_i}+\int_{\beta_1}c_1(\bP^3)-|\eta|=2+\ell(\eta).
\nan
Note that $\beta_1\cdot H=|\eta|$, and hence $\beta_1=|\eta|L$, which implies that
\ban
\int_{\beta_1}c_1(\bP^3)=4|\eta|.
\nan
Now the dimension constraint becomes
\ban
\frac 12\sum\limits_{i=1}^{\ell(\eta)}deg\delta^{j_i}+3|\eta|=2+\ell(\eta).
\nan
So the dimension constraint holds only if $\eta=(1,[pt])$, which implies Lemma \ref{ptlemma3}.
\end{proof}

\begin{lemma}\label{ptlemma4}
\ban
&&Z_P\bigg(\tilde X;q|\prod\limits_{i=1}^m\tau_{d_i}(p^*\gamma_i)\bigg)_{p^!\beta-e}\\
&=&Z_P\bigg(\tilde P^3/H;q||(1,[pt])\bigg)_{F}\cdot\frac{1}{q}\cdot Z_P\bigg(\tilde X/E;q|\prod\limits_{i=1}^m\tau_{d_i}(p^*\gamma_i)|(1,\mathbbm 1)\bigg)_{p^!\beta-e},
\nan
where $F$ is the fiber class of $\tilde P^3\cong\bP_H(\cO(1)\oplus\cO)$.
\end{lemma}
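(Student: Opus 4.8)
The plan is to rerun the degeneration used for Lemma~\ref{ptlemma2} and Theorem~\ref{pt0}, and then isolate the single triple $(\eta,\beta_1,\beta_2)$ that survives the dimension constraint. First I would degenerate $\tilde X$ along $E$, so that the degeneration formula takes exactly the form~\eqref{dpt0}, but now with total class $p^!\beta-e$. Because each $\gamma_i$ has support away from $E$, the global lift of $p^*\gamma_i$ restricts to $0$ on the $\tilde\bP^3$ component; hence every descendant insertion is carried by the relative factor $Z_P(\tilde X/E;\cdots|\eta^\vee)_{\beta_2}$, while the factor $Z_P(\tilde\bP^3/H;q||\eta)_{\beta_1}$ carries no insertions.

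Next, assuming as usual that degrees match, so that $v\textrm{dim}_\bC P_n(\tilde X,p^!\beta-e)=\frac{1}{2}\sum_{i=1}^m deg\gamma_i+\sum_{i=1}^m d_i-m$, the same computation as in the proof of Lemma~\ref{ptlemma2} reduces the constraint for any contributing triple to
\[
\frac{1}{2}\sum_{i=1}^{\ell(\eta)}deg\delta^{j_i}+\int_{\beta_1}c_1(\tilde\bP^3)-|\eta|=\ell(\eta).
\]
The only new input is the class $\beta_1\in H_2(\tilde\bP^3,\bZ)=\bZ F\oplus\bZ L$. The gluing forces $\beta_1\cdot H=|\eta|$, while the coefficient $-1$ of $e$ in $p^!\beta-e$ gives $\beta_1\cdot E=1$; this is precisely the $k=-1$ instance of the constraint $\beta_1\cdot E=-k$ established in the proof of Theorem~\ref{pt0}, so that $\int_{\beta_1}c_1(\tilde\bP^3)=4|\eta|+2k=4|\eta|-2$.

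Substituting, the constraint becomes $\frac{1}{2}\sum_{i=1}^{\ell(\eta)}deg\delta^{j_i}+3|\eta|-2=\ell(\eta)$. Since $\ell(\eta)\leqslant|\eta|$ and the left side is at least $3|\eta|-2$, this forces $2|\eta|\leqslant 2$, i.e. $|\eta|\leqslant 1$. The case $|\eta|=0$ is impossible (it gives $-2=0$, and also an ineffective $\beta_1$), so $|\eta|=1$, whence $\ell(\eta)=1$ and $deg\delta^{j_1}=0$. Thus $\delta^{j_1}=\mathbbm 1$, its Poincar\'e dual weight is $[pt]$, and the unique surviving data are $\eta=(1,[pt])$, $\eta^\vee=(1,\mathbbm 1)$, and $\beta_1=F$; the degree splitting then forces $\beta_2=p^!\beta-e$ (the bubble fibre contracts under $\tilde\bP^3\rightarrow E$, contributing $0$ to the class on $\tilde X$), and the weight factor equals $(-1)^{0}\fz(\eta)/q^{1}=1/q$. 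This single term is exactly the right-hand side of the lemma.

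The argument is a line-for-line analogue of Lemma~\ref{ptlemma3} with the point insertion removed, so I do not anticipate a genuine obstacle. The one step that must be handled carefully, and where a sign or an off-by-one would propagate into a false conclusion, is the bookkeeping for $\beta_1$: reading off both $\beta_1\cdot H=|\eta|$ and $\beta_1\cdot E=1$ (the latter coming from the $-e$ summand) and hence the value $\int_{\beta_1}c_1(\tilde\bP^3)=4|\eta|-2$. Once these are correct, the dimension constraint leaves only $\eta=(1,[pt])$ and $\beta_1=F$, and the lemma follows.
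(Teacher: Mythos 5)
Your proposal is correct and follows essentially the same route as the paper: degenerate $\tilde X$ along $E$, write the dimension constraint for a contributing triple, use $\beta_1\cdot H=|\eta|$ and $\beta_1\cdot E=1$ to get $\beta_1=F+(|\eta|-1)L$ and $\int_{\beta_1}c_1(\tilde\bP^3)=4|\eta|-2$, and conclude that only $\eta=(1,[pt])$, $\beta_1=F$ survives. You in fact spell out the final elimination step ($\ell(\eta)\leqslant|\eta|$ forcing $|\eta|=1$ and $\deg\delta^{j_1}=0$) slightly more explicitly than the paper does.
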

\begin{proof}
Degenerate $\tilde X$ along $E$, and by the degeneration formula, we have
\begin{eqnarray}\label{dptlemma4}
&&Z_P\bigg(\tilde X;q|\prod\limits_{i=1}^m\tau_{d_i}(p^*\gamma_i)\bigg)_{p^!\beta-e}\\
&=&\sum Z_P(\tilde\bP^3/H;q||\eta)_{\beta_1}\cdot\frac{(-1)^{|\eta|-\ell(\eta)}\fz(\eta)}{q^{|\eta|}}\cdot Z_P\bigg(\tilde X/E;q|\prod\limits_{i=1}^m\tau_{d_i}(p^*\gamma_i)|\eta^\vee\bigg)_{\beta_2}, \nonumber
\end{eqnarray}
where we have assumed that the class $p^*\gamma_i$ has support away from $E$. By our assumption that degrees match the virtual dimensions, we have
\ban
v\textrm{dim}_\bC P_n(\tilde X,p^!\beta-e)=\frac{1}{2}\sum\limits_{i=1}^{m}\gamma_i+\sum\limits_{i=1}^md_i-m.
\nan

Suppose that $(\eta=\{(\eta_i,\delta_{j_i})\}_{i=1}^{\ell(\eta)},\beta_1,\beta_2)$ has nonzero contribution in \eqref{dptlemma4}. Then
\ban
v\textrm{dim}_\bC P_n(\tilde\bP^3/H,\beta_1)&=&\int_{\beta_1}c_1(\bP^3),\\
v\textrm{dim}_\bC P_n(\tilde X/E,\beta_2)&=&\frac{1}{2}\sum_{i=1}^m deg\gamma_i+\sum_{i=1}^md_i+\frac{1}{2}\sum\limits_{i=1}^{\ell(\eta)}deg\delta^{j_i}-\ell(\eta)+|\eta|-m.
\nan
So by the dimension constraint,
\ban
\frac 12\sum\limits_{i=1}^{\ell(\eta)}deg\delta^{j_i}+\int_{\beta_1}c_1(\tilde{\bP}^3)-|\eta|=\ell(\eta).
\nan

Let $L\in H_2(\tilde\bP^3,\bZ)$ be the class of the total transform of a line in $\bP^3$. Then we have the following natural decomposition
\ban
H_2(\tilde\bP^3,\bZ)=\bZ F\oplus\bZ L.
\nan
We have the following constraint for $\beta_1$:
\ban
\left\{\begin{array}{lcl}\beta_1\cdot H&=&|\eta|,\\\beta_1\cdot E&=&1.\end{array}\right.
\nan
So we have $\beta_1=F+(|\eta|-1)L$, and hence $\int_{\beta_1}c_1(\tilde\bP^3)=4|\eta|-2$. Now the dimension constraint becomes
\ban
\frac 12\sum\limits_{i=1}^{\ell(\eta)}deg\delta^{j_i}+3|\eta|=2+\ell(\eta).
\nan
So the dimension constraint holds only if $\eta=(1,[pt])$, which implies Lemma \ref{ptlemma4}.
\end{proof}

{\bf Proof of Theorem \ref{pt2}:} By Lemma \ref{ptlemma3} and \ref{ptlemma4}, in the particular case $X=\bP^3$, we have
\ban
\frac{Z_P\bigg(\bP^3;q|\tau_0([pt])^2\bigg)_L}{Z_P\bigg(\tilde\bP^3;q|\tau_0([pt])\bigg)_{F}}=\frac{Z_P\bigg(\bP^3/H;q|\tau_0([pt])|(1,[pt])\bigg)_L}{Z_P\bigg(\tilde\bP^3/H;q||(1,[pt])\bigg)_{F}}.
\nan
Now by virtual localization \cite{GP} or by (4.2) in \cite{PT}, we have
\ban
Z_P\bigg(\bP^3;q|\tau_0([pt])^2\bigg)_L&=&q(1+q)^2,\\
Z_P\bigg(\tilde\bP^3;q|\tau_0([pt])\bigg)_{F}&=&q,
\nan
which gives Theorem \ref{pt2}.

Theorem \ref{pt3} relies on the following Lemma \ref{ptlemma5} and \ref{ptlemma6}, the proof of which is analogous to that of Lemma \ref{ptlemma3}, \ref{ptlemma4} respectively. 

\begin{lemma}\label{ptlemma5}
\ban
&&Z_P\bigg(X;q|\tau_1([pt])\prod\limits_{i=1}^m\tau_{d_i}(\gamma_i)\bigg)_\beta\\
&=&Z_P\bigg(\bP^3/H;q|\tau_1([pt])|(1,[L])\bigg)_L\cdot\frac{1}{q}\cdot Z_P\bigg(\tilde X/E;q|\prod\limits_{i=1}^m\tau_{d_i}(p^*\gamma_i)|(1,[L])\bigg)_{p^!\beta-e}.
\nan
\end{lemma}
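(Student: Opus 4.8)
The plan is to mirror the proof of Lemma \ref{ptlemma3} almost verbatim, the only substantive change being that the marked insertion is now $\tau_1([pt])$ rather than $\tau_0([pt])$. First I would degenerate $X$ at the point $P$, so that the central fibre is $\bP^3\cup_H\tilde X$ with the hyperplane $H$ identified with the exceptional divisor $E$, and apply the degeneration formula. Since $\tau_1([pt])$ is supported at $P$ it is forced into the $\bP^3/H$ factor, while each $p^*\gamma_i$ has support away from $P$ and so lands in the $\tilde X/E$ factor; this yields a sum of exactly the same shape as in the proof of Lemma \ref{ptlemma3}, now with $\tau_1([pt])$ decorating the $\bP^3/H$ invariant.

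The core of the argument is the dimension count. The descendant $\tau_1([pt])$ contributes $1+\tfrac12\deg[pt]-1=3$ to the dimension constraint, one more than the contribution $2$ of $\tau_0([pt])$; hence $v\textrm{dim}_\bC P_n(X,\beta)=\tfrac12\sum_{i=1}^m\deg\gamma_i+\sum_{i=1}^md_i-m+3$. Exactly as in Lemma \ref{ptlemma3}, the relation $\beta_1\cdot H=|\eta|$ together with $H_2(\bP^3,\bZ)=\bZ L$ forces $\beta_1=|\eta|L$ and $\int_{\beta_1}c_1(\bP^3)=4|\eta|$, so substituting the two relative virtual dimensions into the dimension constraint of the degeneration formula reduces everything to
\[
\tfrac12\sum_{i=1}^{\ell(\eta)}\deg\delta^{j_i}+3|\eta|=3+\ell(\eta).
\]

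I would then solve this constraint. Rewriting the difference of the two sides as $\sum_i\bigl(\tfrac12\deg\delta^{j_i}+3\eta_i-1\bigr)=3$ and noting that each summand is at least $2$ (since $\eta_i\geqslant1$ and $\deg\delta^{j_i}\geqslant0$), there can be only a single part, and it must satisfy $\eta_1=1$ and $\tfrac12\deg\delta^{j_1}=1$. On the glued surface $H\cong\bP^2$ the only class of complex degree $1$ is the line class $[L]$, which is moreover self-dual under Poincar\'e duality; hence the unique surviving partition is $\eta=(1,[L])$ with $\eta^\vee=(1,[L])$. This is precisely the expected shift from Lemma \ref{ptlemma3}: the extra unit of virtual dimension coming from $\tau_1$ in place of $\tau_0$ raises the weight of the surviving relative class by one complex degree, replacing the pair $\bigl((1,[pt]),(1,\mathbbm 1)\bigr)$ there by $\bigl((1,[L]),(1,[L])\bigr)$ here.

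Finally, I would evaluate the universal prefactor $\frac{(-1)^{|\eta|-\ell(\eta)}\fz(\eta)}{q^{|\eta|}}$ at $\eta=(1,[L])$, where $|\eta|=\ell(\eta)=1$ and $\fz(\eta)=1$, so that it equals $\tfrac1q$; since this is the only surviving term, the degeneration sum collapses to the asserted identity. The one step requiring genuine attention, rather than mechanical transcription from Lemma \ref{ptlemma3}, is the bookkeeping of the descendant weight in the virtual dimension together with the self-duality of $[L]$ on $\bP^2$: it is this self-duality that makes both relative partitions in the statement coincide with $(1,[L])$, whereas everything else follows the template of Lemma \ref{ptlemma3}.
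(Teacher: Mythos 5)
Your proposal is correct and follows exactly the route the paper intends: the paper gives no separate proof of Lemma \ref{ptlemma5}, stating only that it is analogous to Lemma \ref{ptlemma3}, and your argument is precisely that analogy carried out — the dimension constraint becomes $\tfrac12\sum\deg\delta^{j_i}+3|\eta|=3+\ell(\eta)$, forcing $\eta=(1,[L])$ with $[L]$ self-dual on $H\cong\bP^2$, and the prefactor evaluates to $1/q$.
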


\begin{lemma}\label{ptlemma6}
\ban
&&Z_P\bigg(\tilde X;q|\tau_0(-E^2)\prod\limits_{i=1}^m\tau_{d_i}(p^*\gamma_i)\bigg)_{p^!\beta-e}\\
&=&Z_P\bigg(\tilde X/H;q|\tau_0(-E^2)|(1,[L])\bigg)_F\cdot\frac{1}{q}\cdot Z_P\bigg(\tilde X/E;q|\prod\limits_{i=1}^m\tau_{d_i}(p^*\gamma_i)|(1,[L])\bigg)_{p^!\beta-e}.
\nan
\end{lemma}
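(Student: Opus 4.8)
The plan is to follow the proof of Lemma \ref{ptlemma4} essentially verbatim, the only genuinely new ingredient being the treatment of the descendant insertion $\tau_0(-E^2)$. First I would degenerate $\tilde X$ along the exceptional divisor $E$ and apply the degeneration formula. The classes $p^*\gamma_i$ have support away from $E$, so they are carried by the main component $\tilde X/E$. The class $-E^2$ equals $PD_{\tilde X}(e)$, the Poincar\'e dual of a line $e$ in $E$, and can therefore be represented by a cycle supported in a neighbourhood of $E$; under the degeneration this neighbourhood becomes the bubble $\tilde\bP^3$, in which $E$ plays the role of the relative divisor $H$. Hence $\tau_0(-E^2)$ is carried by $\tilde\bP^3/H$, and the degeneration formula expresses the left-hand side as a sum, over cohomology weighted partitions $\eta$ and degree splittings, of products
\[
Z_P\big(\tilde\bP^3/H;q|\tau_0(-E^2)|\eta\big)_{\beta_1}\cdot\frac{(-1)^{|\eta|-\ell(\eta)}\fz(\eta)}{q^{|\eta|}}\cdot Z_P\bigg(\tilde X/E;q|\prod_{i=1}^m\tau_{d_i}(p^*\gamma_i)|\eta^\vee\bigg)_{\beta_2},
\]
exactly as in \eqref{dptlemma4}.

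Next I would run the dimension count. Since $-E^2\in H^4(\tilde X,\bQ)$, the insertion $\tau_0(-E^2)$ contributes $\tfrac12\cdot 4-1=1$ to the virtual dimension, so that $v\textrm{dim}_\bC P_n(\tilde X,p^!\beta-e)=\tfrac12\sum_{i=1}^m deg\gamma_i+\sum_{i=1}^m d_i+1-m$. Substituting the two relative virtual dimensions into the dimension constraint and cancelling exactly as in Lemma \ref{ptlemma4}, I obtain
\[
\tfrac12\sum_{i=1}^{\ell(\eta)}deg\delta^{j_i}+\int_{\beta_1}c_1(\tilde\bP^3)-|\eta|=1+\ell(\eta).
\]

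I would then pin down $\beta_1$. Using the decomposition $H_2(\tilde\bP^3,\bZ)=\bZ F\oplus\bZ L$ together with the constraints $\beta_1\cdot H=|\eta|$ and $\beta_1\cdot E=1$ coming from the total class $p^!\beta-e$, I get $\beta_1=F+(|\eta|-1)L$ and hence $\int_{\beta_1}c_1(\tilde\bP^3)=4|\eta|-2$, just as in Lemma \ref{ptlemma4}. The dimension constraint then reads
\[
\tfrac12\sum_{i=1}^{\ell(\eta)}deg\delta^{j_i}+3|\eta|=3+\ell(\eta).
\]
Since $\ell(\eta)\leqslant|\eta|$ and the cohomology degrees are nonnegative, this forces $|\eta|=\ell(\eta)=1$ and $deg\delta^{j_1}=2$, i.e. $\delta^{j_1}=[L]$; as $[L]$ is self-dual on $H\cong\bP^2$, both $\eta$ and $\eta^\vee$ equal $(1,[L])$ and $\beta_1=F$. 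The gluing factor collapses to $\fz(\eta)/q^{|\eta|}=1/q$, and the unique surviving term is precisely the product claimed in the lemma.

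The step I expect to be the main obstacle is the very first one: justifying rigorously that in the degeneration formula the descendant $\tau_0(-E^2)$ is carried entirely by the bubble $\tilde\bP^3$, contributing there as $\tau_0(-E^2)$, rather than distributing across both components. This amounts to choosing the global lifting of $-E^2$ supported near $E$ and checking that its restriction to the main component is cohomologically trivial, and then verifying that the relative invariant $Z_P(\tilde\bP^3/H;q|\tau_0(-E^2)|(1,[L]))_F$ is itself nonzero, so that the surviving term genuinely contributes. Once this localization of the insertion on the exceptional locus is settled, everything else is a routine repetition of the dimension bookkeeping already carried out for Lemma \ref{ptlemma4}.
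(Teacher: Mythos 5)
Your proposal is correct and follows exactly the route the paper intends: the paper's proof of Lemma \ref{ptlemma6} is simply declared ``analogous to that of Lemma \ref{ptlemma4}'', and you have spelled out that analogy faithfully, including the two genuinely new points --- the lifting of $-E^2=PD_{\tilde X}(e)$ to a cycle supported on the bubble (so that $\tau_0(-E^2)$ lands on the $\tilde\bP^3/H$ factor, which the paper misprints as $\tilde X/H$), and the resulting shift of the dimension constraint by $1$, which forces $\eta=\eta^\vee=(1,[L])$ and $\beta_1=F$. Your closing worry about nonvanishing of $Z_P\big(\tilde\bP^3/H;q|\tau_0(-E^2)|(1,[L])\big)_F$ is not needed for the lemma itself (an identity with a zero coefficient would still hold); it only matters later, in the localization step of the proof of Theorem \ref{pt3}.
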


{\bf Proof of Theorem \ref{pt3}:} By Lemma \ref{ptlemma5} and \ref{ptlemma6}, in the particular case $X=\bP^3$, we have
\ban
\frac{Z_P\bigg(\bP^3;q|\tau_1([pt])\tau_0([L])\bigg)_L}{Z_P\bigg(\bP^3;q|\tau_0(-E^2)\tau_0([L])\bigg)_{F}}=\frac{Z_P\bigg(\bP^3/H;q|\tau_1([pt])|(1,[L])\bigg)_L}{Z_P\bigg(\tilde\bP^3/H;q|\tau_0(-E^2)|(1,[L])\bigg)_F},
\nan
By virtual localization \cite{GP}, we have
\ban
Z_P\bigg(\bP^3;q|\tau_1([pt])\tau_0([L])\bigg)_L&=&\frac{1}{2}q(1-q^2),\\
Z_P\bigg(\bP^3;q|\tau_0(-E^2)\tau_0([L])\bigg)_{F}&=&q,
\nan
which gives Theorem \ref{pt3}.

\section{Formulae for blow-up along a curve}

In this section, we consider blow-up along a nonsingular embedded curve and prove Theorem \ref{curve0}, \ref{curve1} and \ref{curve2}.
We always assume that total degrees of insertions match the virtual dimensions of the moduli spaces, since otherwise the required equalities are trivial.

Throughout this section, we let $N_C$  be the normal bundle of $C$ in $X$, and $N_E$ the normal bundle of the exceptional divisor $E$ in $\tilde X$.

We first prove Theorem \ref{curve0}. Degenerate $\tilde X$ along $E$, and by the degeneration formula, we have
\begin{eqnarray}\label{dcurvelemma0}
&&Z_P\bigg(\tilde X;q|\prod\limits_{i=1}^m\tau_{d_i}(p^*\gamma_i)\bigg)_{p^!\beta+ke}\\
&=&\sum Z_P\bigg(\bP_E(N_E\oplus\cO_E)/D_\infty;q||\eta\bigg)_{\beta_1}\cdot\frac{(-1)^{|\eta|-\ell(\eta)}\fz(\eta)}{q^{|\eta|}}\nonumber\\
&&\qquad\cdot Z_P\bigg(\tilde X/E;q|\prod\limits_{i=1}^m\tau_{d_i}(p^*\gamma_i)|\eta^\vee\bigg)_{\beta_2},\nonumber
\end{eqnarray}
where we have assumed that the support of $p^*\gamma_i$ is away from $E$, and $D_\infty=\bP_E(N_E\oplus\{0\})$. Recall that we have assumed that
\ban
v\textrm{dim}_\bC P_n(\tilde X,p^!\beta+ke)=\frac{1}{2}\sum\limits_{i=1}^mdeg\gamma_i+\sum\limits_{i=1}^md_i-m.
\nan

Assume that $(\eta=\{(\eta_i,\delta_{j_i})\}_{i=1}^{\ell(\eta)},\beta_1,\beta_2)$ has nontrivial contribution in \eqref{dcurvelemma0}, and then
\begin{eqnarray*}
&&v\textrm{dim}_\bC P_n(\bP_E(N_E\oplus\cO_E)/D_\infty,\beta_1)=\int_{\beta_1}c_1(\bP_E(N_E\oplus\cO_E)),\\
&&v\textrm{dim}_\bC P_n(\tilde X/E,\beta_2)=\frac{1}{2}\sum_{i=1}^m deg\gamma_i+\sum_{i=1}^md_i+\frac{1}{2}\sum\limits_{i=1}^{\ell(\eta)}deg\delta^{j_i}-\ell(\eta)+|\eta|-m.
\end{eqnarray*}
So by the dimension constraint,
\begin{eqnarray*}
\frac{1}{2}\sum\limits_{i=1}^{\ell(\eta)}\textrm{deg}\delta^{j_i}+\int_{\beta_1}c_1(\bP_E(N_E\oplus\cO_E))-|\eta|=\ell(\eta).\nonumber
\end{eqnarray*}
Let $\xi$ be the tautological line bundle of $\bP_E(N_E\oplus\cO_E)$. Then Euler exact sequence gives
\begin{eqnarray*}
c_1(\bP_E(N_E\oplus\cO_E))=\pi^*c_1(E)+\pi^*c_1(N_E)-2c_1(\xi),
\end{eqnarray*}
where $\pi:\bP_E(N_E\oplus\cO_E)\rightarrow E$ is the natural projection. Note that $N_E$ is the tautological line bundle of $E\cong\bP_C(N_C)$, and so
\begin{eqnarray*}
c_1(E)=\pi_E^*c_1(X)|_C-2c_1(N_E),
\end{eqnarray*}
where $\pi_E:E\rightarrow C$ is the natural projection. Therefore,
\begin{eqnarray*}
c_1(\bP_E(N_E\oplus\cO_E))=(\pi_E\circ\pi)^*c_1(X)|_C-\pi^*c_1(N_E)-2c_1(\xi).
\end{eqnarray*}
Note that we have the following natural decomposition
\begin{eqnarray*}
H_2(\bP_E(N_E\oplus\cO_E),\bZ)\cong\bZ F\oplus H_2(E,\bZ),
\end{eqnarray*} 
and we can write
\begin{eqnarray*}
\beta_1=aF+\pi_*\beta_1,\textrm{ for some }a\in\bZ_{\geqslant 0}.
\end{eqnarray*}
We have the following constraints for $\beta_1$:
\begin{eqnarray*}
\left\{\begin{array}{ccl}\beta_1\cdot D_\infty&=&|\eta|,\\\beta_1\cdot E&=&-k,\end{array}\right.
\end{eqnarray*}
and this gives 
\begin{eqnarray*}
\pi_*\beta_1\cdot E=-|\eta|-k.
\end{eqnarray*}
Note that $-c_1(\xi)$ is the Poincar\'e dual of the divisor $D_\infty$ in $\bP_E(N_E\oplus\cO_E)$, and therefore
\begin{eqnarray*}
\int_{\beta_1}c_1(\bP_E(N_E\oplus\cO_E))=\int_{(\pi_E\circ\pi)_*\beta_1}c_1(X)|_C+3|\eta|+k.
\end{eqnarray*}
Hence the dimension constraint becomes
\begin{eqnarray*}
\frac{1}{2}\sum\limits_{i=1}^{\ell(\eta)}deg\delta^{j_i}+\int_{(\pi_E\circ\pi)_*\beta_1}c_1(X)|_C+2|\eta|+k=\ell(\eta).\nonumber
\end{eqnarray*}
We observe that no partition satisfies the dimension constraint, which gives Theorem \ref{curve0}.

Next, we prove Theorem \ref{curve1}. We divide the proof into two comparison lemmas of stable pair invariants.

\begin{lemma}\label{curvelemma1}
Under the same assumptions as in THeorem \ref{curve1}, we have
\ban
 Z_P\bigg(X;q|\prod\limits_{i=1}^m\tau_{d_i}(\gamma_i)\bigg)_\beta=Z_P\bigg(\tilde X/E;q|\prod\limits_{i=1}^m\tau_{d_i}(p^*\gamma_i)|\bigg)_{p^!\beta}.
\nan
\end{lemma}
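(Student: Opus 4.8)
The plan is to run the argument of Lemma \ref{ptlemma1} with the deformation of $X$ to a point replaced by the deformation to the normal cone of $C$. First I would degenerate $X$ along $C$, so that the central fibre of the resulting semistable family is $\tilde X\cup_E W$, where $W:=\bP_C(N_C\oplus\cO_C)$ is the projective completion of the normal cone and $\tilde X=\mathrm{Bl}_CX$; the two nonsingular $3$-folds meet transversally along the ruled surface $E=\bP_C(N_C)$, which is simultaneously the exceptional divisor of $\tilde X$ and the divisor at infinity $D_\infty\subset W$. Applying the degeneration formula then expresses $Z_P(X;q|\prod_{i}\tau_{d_i}(\gamma_i))_\beta$ as a sum over triples $(\eta,\beta_1,\beta_2)$ of the product of $Z_P(W/D_\infty;q||\eta)_{\beta_1}$, the standard gluing factor, and $Z_P(\tilde X/E;q|\prod_i\tau_{d_i}(p^*\gamma_i)|\eta^\vee)_{\beta_2}$. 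Since each $\gamma_i$ is supported away from $C$, its restriction to $W$ vanishes, so all descendant insertions are forced onto the $\tilde X$ factor and the bubble $W$ carries none.

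The heart of the matter is a dimension count showing that only the trivial term survives. Following the computation in Theorem \ref{curve0}, I would first determine $c_1(W)$: the relative Euler sequence for the $\bP^2$-bundle $\pi_C:W\to C$ gives $c_1(W)=\pi_C^*c_1(C)+\pi_C^*c_1(N_C)+3\zeta$ with $\zeta=c_1(\cO_W(1))=[D_\infty]$, and the adjunction identity $c_1(N_C)=c_1(X)|_C-c_1(C)$ simplifies this to $c_1(W)=\pi_C^*\big(c_1(X)|_C\big)+3\zeta$. Writing $\beta_1=aF+\sigma$ in the decomposition $H_2(W,\bZ)\cong\bZ F\oplus H_2(C,\bZ)$ and imposing the gluing constraint $\beta_1\cdot D_\infty=|\eta|$, a routine computation yields $\int_{\beta_1}c_1(W)=d\int_Cc_1(X)+3|\eta|$, where $d:=\deg\pi_{C*}\beta_1\geqslant0$. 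Substituting into the virtual dimension constraint produces
\begin{equation*}
\tfrac12\sum_{i=1}^{\ell(\eta)}\deg\delta^{j_i}+d\int_Cc_1(X)+2|\eta|=\ell(\eta).
\end{equation*}

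Since every part of $\eta$ is at least $1$ we have $|\eta|\geqslant\ell(\eta)$, so the single term $2|\eta|$ already exceeds $\ell(\eta)$ as soon as $\eta\neq\emptyset$; as the remaining summands on the left are nonnegative, no nonempty $\eta$ can satisfy the equation. For $\eta=\emptyset$ the constraint collapses to $d\int_Cc_1(X)=0$, and here the hypothesis $\int_Cc_1(X)>0$ of Theorem \ref{curve1} forces $d=0$, after which $\beta_1\cdot D_\infty=0$ gives $\beta_1=0$. Hence only the term $(\eta,\beta_1,\beta_2)=(\emptyset,0,p^!\beta)$ contributes; its bubble factor $Z_P(W/D_\infty;q||)_0$ and its gluing factor are both $1$, so the sum collapses to $Z_P(\tilde X/E;q|\prod_i\tau_{d_i}(p^*\gamma_i)|)_{p^!\beta}$, which is the claimed identity. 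I expect the only real obstacle to be the Chern-class and intersection bookkeeping on $W$ that isolates the term $d\int_Cc_1(X)$; this is exactly where the strict positivity $\int_Cc_1(X)>0$ is needed, in contrast to the weaker hypothesis $\int_Cc_1(X)\geqslant0$ of Theorem \ref{curve0}, where the extra $+k$ coming from $\beta_1\cdot E=-k$ already rules out every term.
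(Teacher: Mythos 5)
Your proof is correct and follows essentially the same route as the paper: degenerate $X$ to the normal cone of $C$, apply the degeneration formula, and use the dimension constraint together with $\int_{\beta_1}c_1(\bP_C(N_C\oplus\cO_C))=\int_{\pi_*\beta_1}c_1(X)|_C+3|\eta|$ to force $\eta=\emptyset$ and $\int_{\pi_*\beta_1}c_1(X)|_C=0$. Your write-up is in fact slightly more explicit than the paper's at the last step, where effectivity of $\pi_*\beta_1$ and the hypothesis $\int_Cc_1(X)>0$ are used to conclude $\beta_1=0$.
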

\begin{proof}
Degenerate $X$ along $C$, and by the degeneration formula, we have:
\begin{eqnarray}\label{dcurvelemma1}
&&Z_P\bigg(X;q|\prod\limits_{i=1}^m\tau_{d_i}(\gamma_i)\bigg)_\beta\\
&=&\sum Z_P\bigg(\bP_C(N_C\oplus\cO_C)/D_\infty;q||\eta\bigg)_{\beta_1}\cdot\frac{(-1)^{|\eta|-\ell(\eta)}\fz(\eta)}{q^{|\eta|}}\nonumber\\
&&\qquad\cdot Z_P\bigg(\tilde X/E;q|\prod\limits_{i=1}^m\tau_{d_i}(p^*\gamma_i)|\eta^\vee\bigg)_{\beta_2}, \nonumber
\end{eqnarray}
where we have assumed that the support of $\gamma_i$ is away from $C$, and $D_\infty=\bP_C(N_C\oplus\{0\})$. Recall that we have assumed that 
\ban
v\textrm{dim}_\bC P_n(X,\beta)=\frac{1}{2}\sum\limits_{i=1}^mdeg\gamma_i+\sum\limits_{i=1}^md_i-m.
\nan

Suppose that $(\eta=\{(\eta_i,\delta_{j_i})\}_{i=1}^{\ell(\eta)},\beta_1,\beta_2)$ has nonzero contribution in \eqref{dcurvelemma1}. Then
\ban
&&v\textrm{dim}_\bC P_n(\bP_C(N_C\oplus\cO_C)/D_\infty,\beta_1)=\int_{\beta_1}c_1(\bP_C(N_C\oplus\cO_C)),\\
&&v\textrm{dim}_\bC P_n(\tilde X/E,\beta_2)=\frac{1}{2}\sum_{i=1}^m deg\gamma_i+\sum_{i=1}^md_i+\frac{1}{2}\sum\limits_{i=1}^{\ell(\eta)}deg\delta^{j_i}-\ell(\eta)+|\eta|-m.
\nan
So by the dimension constraint,
\begin{eqnarray*}
\frac{1}{2}\sum\limits_{i=1}^{\ell(\eta)}deg\delta^{j_i}+\int_{\beta_1}c_1(\bP_C(N_C\oplus\cO_C))-|\eta|=\ell(\eta).\nonumber
\end{eqnarray*}
Let $\xi$ be the tautological line bundle of $\bP_C(N_C\oplus\cO_C)$, and we have
\begin{eqnarray*}
c_1(\bP_C(N_C\oplus\cO_C))=\pi^*c_1(X)|_C-3c_1(\xi),
\end{eqnarray*}
where $\pi:\bP_C(N_C\oplus\cO_C)\rightarrow C$ is the natural projection. Note that $-c_1(\xi)$ is the Poincar\'e dual of the divisor $D_\infty$ in $\bP_C(N_C\oplus\cO_C)$. Since $|\eta|=\beta_1\cdot D_\infty$, it follows that
\begin{eqnarray*}
\int_{\beta_1}c_1(\bP_C(N_C\oplus\cO_C))=\int_{\pi_*\beta_1}c_1(X)|_C+3|\eta|.
\end{eqnarray*}
Therefore, dimension constraint becomes
\begin{eqnarray*}
&&\frac{1}{2}\sum\limits_{i=1}^{\ell(\eta)}deg\delta^{j_i}+\int_{\pi_*\beta_1}c_1(X)|_C+2|\eta|=\ell(\eta).
\end{eqnarray*}
The dimension constraint holds only if 
\begin{eqnarray*}
\eta=\emptyset,\quad\int_{\pi_*\beta_1}c_1(X)|_C=0,
\end{eqnarray*}
which implies Lemma \ref{curvelemma1}.
\end{proof}

\begin{lemma}\label{curvelemma2}
Under the same assumptions as in Theorem \ref{curve1}, we have
\ban
Z_P\bigg(\tilde X;q|\prod\limits_{i=1}^m\tau_{d_i}(p^*\gamma_i)\bigg)_{p^!\beta}=Z_P\bigg(\tilde X/E;q|\prod\limits_{i=1}^m\tau_{d_i}(p^*\gamma_i)|\bigg)_{p^!\beta}.
\nan
\end{lemma}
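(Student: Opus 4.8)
The plan is to follow the template of Lemma~\ref{ptlemma2}, using the curve-degeneration geometry already assembled in the proof of Theorem~\ref{curve0}. First I would degenerate $\tilde X$ along $E$ by deformation to the normal cone, so that the degeneration formula writes the left-hand side as a sum of the same shape as \eqref{dcurvelemma0}, namely
\begin{eqnarray*}
\sum Z_P\bigg(\bP_E(N_E\oplus\cO_E)/D_\infty;q||\eta\bigg)_{\beta_1}\cdot\frac{(-1)^{|\eta|-\ell(\eta)}\fz(\eta)}{q^{|\eta|}}\cdot Z_P\bigg(\tilde X/E;q|\prod_{i=1}^m\tau_{d_i}(p^*\gamma_i)|\eta^\vee\bigg)_{\beta_2},
\end{eqnarray*}
where we again use that each $p^*\gamma_i$ has support away from $E$ and $D_\infty=\bP_E(N_E\oplus\{0\})$. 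The target is to show that the sole surviving triple is $(\eta,\beta_1,\beta_2)=(\emptyset,0,p^!\beta)$, for which the projective-bundle factor equals $1$ and the whole sum collapses to $Z_P(\tilde X/E;q|\prod_{i=1}^m\tau_{d_i}(p^*\gamma_i)|)_{p^!\beta}$.

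Next I would impose the dimension constraint on any triple $(\eta,\beta_1,\beta_2)$ with nonzero contribution, reusing verbatim the Chern-class identities derived in Theorem~\ref{curve0}, in particular
\begin{eqnarray*}
c_1(\bP_E(N_E\oplus\cO_E))=(\pi_E\circ\pi)^*c_1(X)|_C-\pi^*c_1(N_E)-2c_1(\xi).
\end{eqnarray*}
The only departure from Theorem~\ref{curve0} lies in the intersection constraints: since the curve class on $\tilde X$ is now $p^!\beta$ rather than $p^!\beta+ke$, one has $\beta_1\cdot E=0$ in place of $\beta_1\cdot E=-k$, while $\beta_1\cdot D_\infty=|\eta|$ still holds. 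Setting $k=0$ in the final identity of Theorem~\ref{curve0} then yields the dimension constraint
\begin{eqnarray*}
\frac{1}{2}\sum_{i=1}^{\ell(\eta)}\textrm{deg}\,\delta^{j_i}+\int_{(\pi_E\circ\pi)_*\beta_1}c_1(X)|_C+2|\eta|=\ell(\eta).
\end{eqnarray*}

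The heart of the argument is then a positivity estimate on this identity. Every summand on the left is nonnegative: the descendant degrees are nonnegative, and $(\pi_E\circ\pi)_*\beta_1$ is effective on $C$, so $\int_{(\pi_E\circ\pi)_*\beta_1}c_1(X)|_C\geqslant 0$ using $\int_Cc_1(X)>0$. Since every part of $\eta$ is at least $1$ we have $|\eta|\geqslant\ell(\eta)$, hence $\ell(\eta)\geqslant 2|\eta|\geqslant 2\ell(\eta)$, forcing $\ell(\eta)=0$; thus $\eta=\emptyset$ and, simultaneously, $\int_{(\pi_E\circ\pi)_*\beta_1}c_1(X)|_C=0$.

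The hard part will be upgrading these two vanishings to $\beta_1=0$, exactly the step that distinguishes this lemma from Theorem~\ref{curve0} (where the strict inequality $k>0$ already excludes every $\eta$, whereas here the boundary case $\eta=\emptyset$ genuinely survives and must be shown to reproduce the relative invariant). Using $\int_Cc_1(X)>0$ strictly, the vanishing $\int_{(\pi_E\circ\pi)_*\beta_1}c_1(X)|_C=0$ forces $(\pi_E\circ\pi)_*\beta_1=0$, so in the decomposition $H_2(\bP_E(N_E\oplus\cO_E),\bZ)\cong\bZ F\oplus H_2(E,\bZ)$ the class $\beta_1$ has no component in the $C$-direction of the surface $E$. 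The remaining fiber-type components are then eliminated by the intersection conditions $\beta_1\cdot D_\infty=|\eta|=0$ and $\beta_1\cdot E=0$, giving $\beta_1=0$. With $(\eta,\beta_1)=(\emptyset,0)$ the degree splitting forces $\beta_2=p^!\beta$ and the projective-bundle relative invariant is trivial, so the degeneration sum reduces to the single term claimed, which is Lemma~\ref{curvelemma2}.
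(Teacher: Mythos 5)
Your proposal is correct and follows essentially the same route as the paper: degenerate $\tilde X$ along $E$, apply the degeneration formula, and use the dimension constraint together with the Chern class computation from Theorem \ref{curve0} (with $k=0$) to force $\eta=\emptyset$ and $\int_{(\pi_E\circ\pi)_*\beta_1}c_1(X)|_C=0$. Your final paragraph merely makes explicit the step the paper leaves implicit, namely that these two vanishings together with $\beta_1\cdot D_\infty=\beta_1\cdot E=0$ and the strict positivity $\int_Cc_1(X)>0$ force $\beta_1=0$, so the sum collapses to the single claimed term.
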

\begin{proof}
Degenerate $\tilde X$ along $E$, and by the degeneration formula, we have
\begin{eqnarray}\label{dcurvelemma2}
&&Z_P\bigg(\tilde X;q|\prod\limits_{i=1}^m\tau_{d_i}(p^*\gamma_i)\bigg)_{p^!\beta}\\
&=&\sum Z_P\bigg(\bP_E(N_E\oplus\cO_E)/D_\infty;q||\eta\bigg)_{\beta_1}\cdot\frac{(-1)^{|\eta|-\ell(\eta)}\fz(\eta)}{q^{|\eta|}}\nonumber\\
&&\qquad\cdot Z_P\bigg(\tilde X/E;q|\prod\limits_{i=1}^m\tau_{d_i}(p^*\gamma_i)|\eta^\vee\bigg)_{\beta_2},\nonumber
\end{eqnarray}
where we have assumed that the support of $p^*\alpha_i$ is away from $E$, and $D_\infty=\bP_E(N_E\oplus\{0\})$. Recall that we have assumed that
\ban
v\textrm{dim}_\bC P_n(\tilde X,p^!\beta)=\frac{1}{2}\sum\limits_{i=1}^mdeg\gamma_i+\sum\limits_{i=1}^md_i-m.
\nan

Assume that $(\eta=\{(\eta_i,\delta_{j_i})\}_{i=1}^{\ell(\eta)},\beta_1,\beta_2)$ has nontrivial contribution in \eqref{dcurvelemma2}, and then
\begin{eqnarray*}
&&v\textrm{dim}_\bC P_n(\bP_E(N_E\oplus\cO_E)/D_\infty,\beta_1)=\int_{\beta_1}c_1(\bP_E(N_E\oplus\cO_E)),\\
&&v\textrm{dim}_\bC P_n(\tilde X/E,\beta_2)=\frac{1}{2}\sum_{i=1}^m deg\gamma_i+\sum_{i=1}^md_i+\frac{1}{2}\sum\limits_{i=1}^{\ell(\eta)}deg\delta^{j_i}-\ell(\eta)+|\eta|-m.
\end{eqnarray*}
So by the dimension constraint,
\begin{eqnarray*}
\frac{1}{2}\sum\limits_{i=1}^{\ell(\eta)}\textrm{deg}\delta^{j_i}+\int_{\beta_1}c_1(\bP_E(N_E\oplus\cO_E))-|\eta|=\ell(\eta).\nonumber
\end{eqnarray*}
Following the notations in the proof of Theorem \ref{curve0}, we have the following constraints for $\beta_1$:
\begin{eqnarray*}
\left\{\begin{array}{ccl}\beta_1\cdot D_\infty&=&|\eta|,\\\beta_1\cdot E&=&0,\end{array}\right.
\end{eqnarray*}
and this gives 
\begin{eqnarray*}
\pi_*\beta_1\cdot E=-|\eta|,
\end{eqnarray*}
which implies that
\begin{eqnarray*}
\int_{\beta_1}c_1(\bP_E(N_E\oplus\cO_E))=\int_{(\pi_E\circ\pi)_*\beta_1}c_1(X)|_C+3|\eta|.
\end{eqnarray*}
Hence the dimension constraint becomes
\begin{eqnarray*}
\frac{1}{2}\sum\limits_{i=1}^{\ell(\eta)}deg\delta^{j_i}+\int_{(\pi_E\circ\pi)_*\beta_1}c_1(X)|_C+2|\eta|=\ell(\eta).\nonumber
\end{eqnarray*}
So the dimension constraint holds only if 
\begin{eqnarray*}
\eta=\emptyset,\quad\int_{(\pi_E\circ\pi)_*\beta_1}c_1(X)|_C=0,
\end{eqnarray*}
which implies Lemma \ref{curvelemma2}.
\end{proof}

The above two comparison results give Theorem \ref{curve1}.

To prove Theorem \ref{curve2}, we need the following two lemmas.

\begin{lemma}\label{curvelemma3}
Under the same assumptions as in Theorem \ref{curve2}, we have
\ban
&&Z_P\bigg(X;q|\tau_0([C])\prod\limits_{i=1}^m\tau_{d_i}(\gamma_i)\bigg)_\beta\\
&=&Z_P\bigg(\bP_C(N_C\oplus\cO_C)/D_\infty;q|\tau_0([C])|(1,[pt])\bigg)_F\cdot\frac{1}{q}\cdot Z_P\bigg(\tilde X/E;q|\prod\limits_{i=1}^m\tau_{d_i}(p^*\gamma_i)|(1,\mathbbm 1)\bigg)_{p^!\beta-e},
\nan
where $D_\infty=\bP_C(N_C\oplus\{0\})$, and $F$ is the class of a line in the fiber of $\bP_C(N_C\oplus\cO_C)$.
\end{lemma}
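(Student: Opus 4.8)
The plan is to imitate the proof of Lemma \ref{curvelemma1}, now carrying the extra descendant $\tau_0([C])$ through the same degeneration. First I would degenerate $X$ along $C$, so that the central fibre is $\tilde X\cup_E\bP_C(N_C\oplus\cO_C)$ with $D_\infty=\bP_C(N_C\oplus\{0\})$ identified with $E$, and apply the degeneration formula to expand the left-hand side as a sum over triples $(\eta,\beta_1,\beta_2)$ of products of relative invariants of $(\bP_C(N_C\oplus\cO_C),D_\infty)$ and $(\tilde X,E)$. The point to record here is that $[C]$ is supported on the blow-up centre, so a global lifting restricts to the section class of $C$ on the $\bP_C(N_C\oplus\cO_C)$-component and to $0$ on $\tilde X$; hence $\tau_0([C])$ is forced onto the $\bP_C$-factor, while the $\tau_{d_i}(p^*\gamma_i)$, having support away from $C$, land on the $\tilde X/E$-factor.

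Next I would run the dimension count exactly as before. Since $\tau_0([C])$ has complex codimension $\tfrac12\deg[C]-1=1$, the matching hypothesis now gives $v\textrm{dim}_\bC P_n(X,\beta)=\tfrac12\sum deg\gamma_i+\sum d_i+1-m$, one more than in Lemma \ref{curvelemma1}. Substituting the two relative virtual dimensions into the degeneration dimension constraint, and reusing the identity $\int_{\beta_1}c_1(\bP_C(N_C\oplus\cO_C))=\int_{\pi_*\beta_1}c_1(X)|_C+3|\eta|$ already established in the proof of Lemma \ref{curvelemma1}, the constraint for a contributing triple collapses to
\ban
\frac12\sum\limits_{i=1}^{\ell(\eta)}deg\delta^{j_i}+\int_{\pi_*\beta_1}c_1(X)|_C+2|\eta|=1+\ell(\eta).
\nan

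I would then solve this. Writing $\pi_*\beta_1=b[C]$ with $b\in\bZ_{\geqslant 0}$ and $c=\int_Cc_1(X)$, every summand on the left is non-negative, and since $|\eta|\geqslant\ell(\eta)$ the relation forces $2|\eta|\leqslant 1+|\eta|$, hence $|\eta|\leqslant 1$. The case $\eta=\emptyset$ would require $bc=1$, impossible when $c>1$; the case $|\eta|=1$ forces $\ell(\eta)=1$, $deg\delta^{j_1}=0$ and $bc=0$, so that $\delta_{j_1}=[pt]$ (giving $\eta=(1,[pt])$ and $\eta^\vee=(1,\mathbbm 1)$) and, since $c>1$, $b=0$, i.e. $\pi_*\beta_1=0$. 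Combined with $\beta_1\cdot D_\infty=|\eta|=1$ this pins down $\beta_1=F$, and the degree splitting then forces $\beta_2=p^!\beta-e$. The surviving term carries the factor $(-1)^{|\eta|-\ell(\eta)}\fz(\eta)/q^{|\eta|}=1/q$, which is precisely the claimed identity.

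The main obstacle, and the reason the hypothesis $\int_Cc_1(X)>1$ is imposed, is exactly this solvability analysis: one must simultaneously exclude the no-bubble partition $\eta=\emptyset$ (ruled out by $c\neq 1$) and any horizontal contribution $\pi_*\beta_1\neq 0$ in the $|\eta|=1$ case (ruled out by $c>0$), and it is the strict inequality $c>1$ that kills all unwanted terms while leaving the single term $\eta=(1,[pt])$, $\beta_1=F$. The other delicate point requiring care is justifying that the global lifting of $[C]$ restricts to $0$ on the $\tilde X$-component, so that $\tau_0([C])$ cannot be distributed onto the $\tilde X/E$-side.
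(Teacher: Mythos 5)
Your proposal follows the paper's proof essentially verbatim: degenerate $X$ along $C$, place $\tau_0([C])$ on the $\bP_C(N_C\oplus\cO_C)$-factor and the $\tau_{d_i}(p^*\gamma_i)$ on the $\tilde X/E$-factor, and reduce the dimension constraint to $\tfrac12\sum\textrm{deg}\,\delta^{j_i}+\int_{\pi_*\beta_1}c_1(X)|_C+2|\eta|=1+\ell(\eta)$, which under $\int_Cc_1(X)>1$ leaves only $\eta=(1,[pt])$, $\beta_1=F$. Your explicit case analysis ($|\eta|\leqslant 1$, ruling out $\eta=\emptyset$ via $bc=1$ and horizontal components via $bc=0$) correctly fills in the step the paper states without detail, so the argument is correct and takes the same route.
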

\begin{proof}
Degenerate $X$ along $C$, and by the degeneration formula, we have:
\begin{eqnarray}\label{dcurvelemma3}
&&Z_P\bigg(X;q|\tau_0([C])\prod\limits_{i=1}^m\tau_{d_i}(\gamma_i)\bigg)_\beta\\
&=&\sum Z_P\bigg(\bP_C(N_C\oplus\cO_C)/D_\infty;q|\tau_0[C]|\eta\bigg)_{\beta_1}\cdot\frac{(-1)^{|\eta|-\ell(\eta)}\fz(\eta)}{q^{|\eta|}}\nonumber\\
&&\qquad\cdot Z_P\bigg(\tilde X/E;q|\prod\limits_{i=1}^m\tau_{d_i}(p^*\gamma_i)|\eta^\vee\bigg)_{\beta_2}, \nonumber
\end{eqnarray}
where we have assumed that the support of $\gamma_i$ is away from $C$. Recall that we have assumed that 
\ban
v\textrm{dim}_\bC P_n(X,\beta)=\frac{1}{2}\sum\limits_{i=1}^mdeg\gamma_i+\sum\limits_{i=1}^md_i+1-m.
\nan

Suppose that $(\eta=\{(\eta_i,\delta_{j_i})\}_{i=1}^{\ell(\eta)},\beta_1,\beta_2)$ has nonzero contribution in \eqref{dcurvelemma3}. Then
\ban
&&v\textrm{dim}_\bC P_n(\bP_C(N_C\oplus\cO_C)/D_\infty,\beta_1)=\int_{\beta_1}c_1(\bP_C(N_C\oplus\cO_C)),\\
&&v\textrm{dim}_\bC P_n(\tilde X/E,\beta_2)=\frac{1}{2}\sum_{i=1}^m deg\gamma_i+\sum_{i=1}^md_i+\frac{1}{2}\sum\limits_{i=1}^{\ell(\eta)}deg\delta^{j_i}-\ell(\eta)+|\eta|-m.
\nan
So by the dimension constraint,
\begin{eqnarray*}
\frac{1}{2}\sum\limits_{i=1}^{\ell(\eta)}deg\delta^{j_i}+\int_{\beta_1}c_1(\bP_C(N_C\oplus\cO_C))-|\eta|=1+\ell(\eta).\nonumber
\end{eqnarray*}
As in the proof of Lemma \ref{curvelemma1}, one can check that 
\ban
\int_{\beta_1}c_1(\bP_C(N_C\oplus\cO_C))=\int_{\pi_*\beta_1}c_1(X)|_C+3|\eta|,
\nan
and the dimension constraint becomes
\begin{eqnarray*}
&&\frac{1}{2}\sum\limits_{i=1}^{\ell(\eta)}deg\delta^{j_i}+\int_{\pi_*\beta_1}c_1(X)|_C+2|\eta|=1+\ell(\eta).
\end{eqnarray*}
The dimension constraint holds only if 
\begin{eqnarray*}
\eta=(1,[pt]),\quad\int_{\pi_*\beta_1}c_1(X)|_C=0,
\end{eqnarray*}
which implies Lemma \ref{curvelemma3}.
\end{proof}

\begin{lemma}\label{curvelemma4}
Under the same assumptions as in Theorem \ref{curve2}, we have
\ban
&&Z_P\bigg(\tilde X;q|\prod\limits_{i=1}^m\tau_{d_i}(p^*\gamma_i)\bigg)_{p^!\beta-e}\\
&=&Z_P\bigg(\bP_E(N_E\oplus\cO_E)/D_\infty;q|\tau_0(E)|(1,[pt])\bigg)_F\cdot\frac{1}{q}\cdot Z_P\bigg(\tilde X/E;q|\prod\limits_{i=1}^m\tau_{d_i}(p^*\gamma_i)|(1,\mathbbm 1)\bigg)_{p^!\beta-e},
\nan
where $D_\infty=\bP_E(N_E\oplus\{0\})$, and $F$ the class of a line in the fiber of $\bP_E(N_E\oplus\cO_E)$.
\end{lemma}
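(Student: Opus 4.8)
The plan is to follow the proofs of Lemma~\ref{curvelemma2} and Lemma~\ref{curvelemma3}. First I would degenerate $\tilde X$ along $E$ by deformation to the normal cone, obtaining the central fibre $\tilde X\cup_E\bP_E(N_E\oplus\cO_E)$ glued along $D_\infty=\bP_E(N_E\oplus\{0\})$, and apply the degeneration formula. Since each $p^*\gamma_i$ has support away from $E$, its restriction to the bubble $\bP_E(N_E\oplus\cO_E)$ is zero, so all descendant insertions are carried by the $(\tilde X/E)$-factor while the bubble factor receives no insertion. Thus the degeneration formula expresses the left-hand side as
\[
\sum Z_P\big(\bP_E(N_E\oplus\cO_E)/D_\infty;q||\eta\big)_{\beta_1}\cdot\frac{(-1)^{|\eta|-\ell(\eta)}\fz(\eta)}{q^{|\eta|}}\cdot Z_P\bigg(\tilde X/E;q\Big|\prod_{i=1}^m\tau_{d_i}(p^*\gamma_i)\Big|\eta^\vee\bigg)_{\beta_2}.
\]

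Next I would isolate the contributing terms by the dimension constraint, repeating the computation in the proof of Theorem~\ref{curve0} but now with the class $p^!\beta-e$. Since $(p^!\beta-e)\cdot E=1$, this is the ``$k=-1$'' specialisation: one gets $\beta_1\cdot E=1$, $\pi_*\beta_1\cdot E=1-|\eta|$, and $\int_{\beta_1}c_1(\bP_E(N_E\oplus\cO_E))=\int_{(\pi_E\circ\pi)_*\beta_1}c_1(X)|_C+3|\eta|-1$, so that the constraint becomes
\[
\tfrac12\sum_{i=1}^{\ell(\eta)}\deg\delta^{j_i}+\int_{(\pi_E\circ\pi)_*\beta_1}c_1(X)|_C+2|\eta|=1+\ell(\eta).
\]
Every term on the left is nonnegative and $2|\eta|-\ell(\eta)=\sum_i(2\eta_i-1)\geqslant\ell(\eta)$, forcing $\ell(\eta)\leqslant1$. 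The value $\eta=\emptyset$ would require $\int_{(\pi_E\circ\pi)_*\beta_1}c_1(X)|_C=1$, which is impossible because an effective horizontal class has $c_1(X)|_C$-degree either $0$ or at least $\int_Cc_1(X)>1$; this is exactly where the hypothesis $\int_Cc_1(X)>1$ of Theorem~\ref{curve2} enters. Hence $\eta=(1,\delta)$ with $|\eta|=\ell(\eta)=1$, and the equation reduces to $\tfrac12\deg\delta^{j_1}+\int_{(\pi_E\circ\pi)_*\beta_1}c_1(X)|_C=0$, which forces $\delta^{j_1}=\mathbbm 1$ (so $\eta^\vee=(1,\mathbbm 1)$ and the bubble weight is $(1,[pt])$) and $(\pi_E\circ\pi)_*\beta_1=0$, i.e. $\beta_1=F$. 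With $\fz((1,[pt]))=1$ and $(-1)^{|\eta|-\ell(\eta)}=1$, the prefactor is $1/q$ and the formula collapses to
\[
Z_P\bigg(\tilde X;q\Big|\prod_{i=1}^m\tau_{d_i}(p^*\gamma_i)\bigg)_{p^!\beta-e}=Z_P\big(\bP_E(N_E\oplus\cO_E)/D_\infty;q||(1,[pt])\big)_F\cdot\frac1q\cdot Z_P\bigg(\tilde X/E;q\Big|\prod_{i=1}^m\tau_{d_i}(p^*\gamma_i)\Big|(1,\mathbbm 1)\bigg)_{p^!\beta-e}.
\]

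It remains to match the bubble factor with the one in the statement, which carries the extra insertion $\tau_0(E)$. Here I would use that $\tau_0(E)$ has degree $0$ (namely $\tfrac12\deg E+0-1=0$), so it does not affect the dimension count, and that by the divisor equation it acts on the class $\beta_1=F$ as multiplication by $\int_FE|_{\bP_E(N_E\oplus\cO_E)}$. Computing this restriction as in Theorem~\ref{curve0} (via $N_E=\cO_E(E)|_E$ and the Euler sequence), $E$ restricts to the bubble as a class of fibre degree one meeting $F$ once, and the relative divisor equation contributes no net correction along $D_\infty$; hence $\int_FE|_{\bP_E(N_E\oplus\cO_E)}=1$ and $\tau_0(E)$ may be inserted freely, turning $Z_P(\cdots||(1,[pt]))_F$ into $Z_P(\cdots|\tau_0(E)|(1,[pt]))_F$ and giving the claimed identity.

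The step I expect to be most delicate is this last one: one must pin down the restriction of $E$ to $\bP_E(N_E\oplus\cO_E)$ and verify that the relative divisor equation produces no boundary correction along $D_\infty$, so that the insertion of $\tau_0(E)$ is genuinely harmless. The rest is the same dimension bookkeeping as in Theorem~\ref{curve0} and Lemma~\ref{curvelemma3}, the only new inputs being the value $(p^!\beta-e)\cdot E=1$ and the hypothesis $\int_Cc_1(X)>1$, which is precisely what rules out the $\eta=\emptyset$ contribution.
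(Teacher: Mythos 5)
Your proposal follows the paper's proof essentially verbatim: degenerate $\tilde X$ along $E$, and use the dimension constraint (with $\beta_1\cdot E=1$, so that the constraint reads $\frac12\sum\deg\delta^{j_i}+\int_{(\pi_E\circ\pi)_*\beta_1}c_1(X)|_C+2|\eta|=1+\ell(\eta)$, and with the hypothesis $\int_Cc_1(X)>1$ ruling out $\eta=\emptyset$) to force $\eta=(1,[pt])$ and $\beta_1=F$. The only divergence is cosmetic: you reconcile the $\tau_0(E)$ insertion at the end via the divisor equation on the bubble (using $F\cdot E=1$), whereas the paper inserts $\tau_0(E)$ on the absolute side before degenerating; your version in fact makes explicit a step that the paper's displayed degeneration formula leaves garbled (the $\tau_0(E)$ appears on its left-hand side but is dropped from the bubble factor).
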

\begin{proof}
Degenerate $\tilde X$ along $E$, and by the degeneration formula, we have
\begin{eqnarray}\label{dcurvelemma4}
&&Z_P\bigg(\tilde X;q|\tau_0(E)\prod\limits_{i=1}^m\tau_{d_i}(p^*\gamma_i)\bigg)_{p^!\beta-e}\\
&=&\sum Z_P\bigg(\bP_E(N_E\oplus\cO_E)/D_\infty;q||\eta\bigg)_{\beta_1}\cdot\frac{(-1)^{|\eta|-\ell(\eta)}\fz(\eta)}{q^{|\eta|}}\nonumber\\
&&\qquad\cdot Z_P\bigg(\tilde X/E;q|\prod\limits_{i=1}^m\tau_{d_i}(p^*\gamma_i)|\eta^\vee\bigg)_{\beta_2},\nonumber
\end{eqnarray}
where we have assumed that the support of $p^*\gamma_i$ is away from $E$. Recall that we have assumed that
\ban
v\textrm{dim}_\bC P_n(\tilde X,p^!\beta-e)=\frac{1}{2}\sum\limits_{i=1}^mdeg\gamma_i+\sum\limits_{i=1}^md_i-m.
\nan

Assume that $(\eta=\{(\eta_i,\delta_{j_i})\}_{i=1}^{\ell(\eta)},\beta_1,\beta_2)$ has nontrivial contribution in \eqref{dcurvelemma4}, and then
\begin{eqnarray*}
&&v\textrm{dim}_\bC P_n(\bP_E(N_E\oplus\cO_E)/D_\infty,\beta_1)=\int_{\beta_1}c_1(\bP_E(N_E\oplus\cO_E)),\\
&&v\textrm{dim}_\bC P_n(\tilde X/E,\beta_2)=\frac{1}{2}\sum_{i=1}^m deg\gamma_i+\sum_{i=1}^md_i+\frac{1}{2}\sum\limits_{i=1}^{\ell(\eta)}deg\delta^{j_i}-\ell(\eta)+|\eta|-m.
\end{eqnarray*}
So by the dimension constraint,
\begin{eqnarray*}
\frac{1}{2}\sum\limits_{i=1}^{\ell(\eta)}\textrm{deg}\delta^{j_i}+\int_{\beta_1}c_1(\bP_E(N_E\oplus\cO_E))-|\eta|=\ell(\eta).\nonumber
\end{eqnarray*}
Following the notations in the proof of Theorem \ref{curve0}, we have the following constraints for $\beta_1$:
\begin{eqnarray*}
\left\{\begin{array}{ccl}\beta_1\cdot D_\infty&=&|\eta|,\\\beta_1\cdot E&=&1,\end{array}\right.
\end{eqnarray*}
and this gives 
\begin{eqnarray*}
\pi_*\beta_1\cdot E=-|\eta|+1,
\end{eqnarray*}
which implies that
\begin{eqnarray*}
\int_{\beta_1}c_1(\bP_E(N_E\oplus\cO_E))=\int_{(\pi_E\circ\pi)_*\beta_1}c_1(X)|_C+3|\eta|-1.
\end{eqnarray*}
Hence the dimension constraint becomes
\begin{eqnarray*}
\frac{1}{2}\sum\limits_{i=1}^{\ell(\eta)}deg\delta^{j_i}+\int_{(\pi_E\circ\pi)_*\beta_1}c_1(X)|_C+2|\eta|=1+\ell(\eta).\nonumber
\end{eqnarray*}
So the dimension constraint holds only if 
\begin{eqnarray*}
\eta=(1,[pt]),\quad\int_{(\pi_E\circ\pi)_*\beta_1}c_1(X)|_C=0,
\end{eqnarray*}
which implies Lemma \ref{curvelemma4}.
\end{proof}

{\bf Proof of Theorem \ref{curve2}:} By Lemma \ref{curvelemma3} and \ref{curvelemma4}, in the particular case $X=\bP_C(N_C\oplus\cO_C)$, we have
\ban
\frac{Z_P\bigg(\bP_C(N_C\oplus\cO_C);q|\tau_0([C])\tau_0([pt])\bigg)_F}{Z_P\bigg(\bP_E(N_E\oplus\cO_E);q|\tau_0(E)\tau_0([pt])\bigg)_{F}}=\frac{Z_P\bigg(\bP_C(N_C\oplus\cO_C)/D_\infty;q|\tau_0([C])|(1,[pt])\bigg)_F}{Z_P\bigg(\bP_E(N_E\oplus\cO_E)/D_\infty;q|\tau_0(E)|(1,[pt])\bigg)_F}.
\nan
By (4.2) in \cite{PT}, we have
\ban
Z_P\bigg(\bP_C(N_C\oplus\cO_C)/D_\infty;q|\tau_0([C])|(1,[pt])\bigg)_F&=&q(1+q)^,\\
Z_P\bigg(\bP_E(N_E\oplus\cO_E)/D_\infty;q|\tau_0(E)|(1,[pt])\bigg)_F&=&q.
\nan
which gives Theorem \ref{curve2}.

{\bf Acknowledgements.}
The author would like to thank Jianxun Hu for valuable comments on earlier versions of this paper, and Yongbin Ruan, Wei-Ping Li and Zhenbo Qin for many helpful discussions.

\end{document}